\pgfplotsset{compat=1.15}
\theoremstyle{definition}
\newtheorem{definition}{Definition}[]
\theoremstyle{plain}
\declaretheorem[name=Theorem]{theorem}
\newtheorem{lemma}[theorem]{Lemma}
\newtheorem{conjecture}{Conjecture}
\newtheorem{corollary}[theorem]{Corollary}
\newtheorem{proposition}[theorem]{Proposition}
    \newcommand{\al}{\alpha}
    \newcommand{\be}{\beta}
    \newcommand{\Nbb}{\mathbb{N}}
    \newcommand{\Ofrak}{\mathfrak{O}}
    \newcommand{\Si}{\Sigma}
    \newcommand{\si}{\sigma}
    \newcommand{\Mfrak}{\mathfrak{M}}
    \DeclareMathOperator{\inte}{int}
    \newcommand{\Fcal}{\mathcal{F}}
    \newcommand{\vp}{\varphi}
    \newcommand{\Zbb}{\mathbb{Z}}
    \newcommand{\Tcal}{\mathcal{T}}
    \newcommand{\Qbb}{\mathbb{Q}}
    \newcommand{\ga}{\gamma}
    \newcommand{\Rbb}{\mathbb{R}}
    \newcommand{\afrak}{\mathfrak{a}}
    \newcommand{\Cbb}{\mathbb{C}}
    \DeclareMathOperator{\SG}{SG}
    \DeclareMathOperator{\Frob}{Frob}
\newcommand{\goodemptyset}[1]{%
\begin{tikzpicture}[#1]%
\draw[line width=0.45] (0,0) circle (0.1);%
\draw[line cap=round, line width=0.45] (-0.07,-0.14)--(0.07,0.14);
\end{tikzpicture}%
}
\newcommand{\es}{\raisebox{-1.2pt}{\goodemptyset{}}}
\newcommand{\goodpreceq}[1]{%
\begin{tikzpicture}[#1]%
\node at (0,0) {$\preccurlyeq$};%
\draw[line cap=round, line width=0.45](-0.04,-0.15)--(0.04,0.15);
\end{tikzpicture}%
}
\newcommand{\gprecneq}{\raisebox{-5pt}{\goodpreceq{}}}
\DeclareRobustCommand{\cev}[1]{%
  {\mathpalette\do@cev{#1}}%
}
\newcommand{\do@cev}[2]{%
  \vbox{\offinterlineskip
    \sbox\z@{$\m@th#1 x$}%
    \ialign{##\cr
      \hidewidth\reflectbox{$\m@th#1\vec{}\mkern4mu$}\hidewidth\cr
      \noalign{\kern-\ht\z@}
      $\m@th#1#2$\cr
    }%
  }%
}
\DeclarePairedDelimiterX{\pmodx}[1]{(}{)}{{\operator@font mod}\mkern6mu#1}
\renewcommand{\pmod}{%
  \allowbreak
  \if@display\mkern18mu\else\mkern8mu\fi
  \pmodx
}
\DeclarePairedDelimiterX\braket[2]{\langle}{\rangle}{#1 \delimsize\vert #2}
\newcommand{\colim@}[2]{%
  \vtop{\m@th\ialign{##\cr
    \hfil$#1\operator@font colim$\hfil\cr
    \noalign{\nointerlineskip\kern1.5\ex@}#2\cr
    \noalign{\nointerlineskip\kern-\ex@}\cr}}%
}
\newcommand{\colim}{%
  \mathop{\mathpalette\colim@{\rightarrowfill@\scriptscriptstyle}}\nmlimits@
}
\renewcommand{\varinjlim}{%
  \mathop{\mathpalette\varlim@{\rightarrowfill@\scriptscriptstyle}}\nmlimits@
}
\newcommand{\inlim}{%
  \mathop{\mathpalette\varlim@{\leftarrowfill@\scriptscriptstyle}}\nmlimits@
}
\providecommand{\keywords}[1]
{
  \small	
  {\textit{Keywords:}} #1
}
\title{The Frobenius problem over real number fields}
\author[a]{Alex Feiner\footnote{Corresponding author. Email address: alexander\_feiner@brown.edu. ORCID iD: 0000-0001-9588-7860.}}
\author[b]{Zion Hefty}
\affil[a]{Department of Mathematics, Brown University, 151 Thayer St, Providence, RI 02912.}
\affil[b]{Department of Mathematics and Statistics, Grinnell College, 1115 8th Avenue Grinnell, IA 50112.}
\date{}
\begin{document}
\maketitle

\begin{abstract}
    Given a number field $K$ that is a subfield of the real numbers, we generalize the notion of the classical Frobenius problem to the ring of integers $\Ofrak_K$ of $K$ by describing certain Frobenius semigroups, $\Frob(\al_1,\dots,\al_n)$, for appropriate elements $\al_1,\dots,\al_n\in\Ofrak_K$. We construct a partial ordering on $\Frob(\al_1,\dots,\al_n)$, and show that this set is completely described by the maximal elements with respect to this ordering. We also show that $\Frob(\al_1,\dots,\al_n)$ will always have finitely many such maximal elements, but in general, the number of maximal elements can grow without bound as $n$ is fixed and $\al_1,\dots,\al_n\in\Ofrak_K$ vary. Explicit examples of the Frobenius semigroups are also calculated for certain cases in real quadratic number fields. 
\end{abstract}

\keywords{Frobenius problem, semigroups, algebraic number fields, ring of integers.}

\section*{Acknowledgements}
This research was supported by NSF (DMS) [grant number 1950563]. The authors would like to thank Peter Johnson for many helpful conversations had when conducting this research and comments made in editing the paper. The authors would also like to thank Overtoun Jenda for helpful discussions had in the early stages of the paper. 

\section*{Statements and Declarations}
Declarations of interest: none.

\section{Introduction and Summary of Results}
It is well known that if $\al_1,\dots,\al_n\in\Nbb\coloneqq\{0,1,2,\dots\}$ are nonzero and coprime, then there is some smallest positive integer $\chi(\al_1,\dots,\al_n)$ with the property that for any integer $N\geqslant \chi(\al_1,\dots,\al_n)$, there are natural numbers $x_1,\dots,x_n\in\Nbb$ for which
    $$x_1\al_1+\cdots+x_n\al_n=N.$$
The classical Frobenius problem concerns explicitly finding the number $\chi(\al_1,\dots,\al_n)$. When $n=2$, it is known (see \cite{FrobBook}) that 
    $$\chi(\al_1,\al_2)=(\al_1-1)(\al_2-1),$$
and more complicated formulas are known for $\chi$ when $n=3$ (see \cite{TRIPATHI}). We can restate the classical Frobenius problem as follows: Define a submonoid $\SG(\al_1,\dots,\al_n)$ of $\Nbb$ by
    $$\SG(\al_1,\dots,\al_n)=\left\{\left.\sum_{i=1}^{n}x_i\al_i \, \right| x_1,\dots,x_n\in\Nbb\right\}.$$
Then the classical Frobenius problem is to determine the semigroup
    $$\Frob(\al_1,\dots,\al_n)=\{w\in\SG(\al_1,\dots,\al_n) \mid w+\Nbb\subseteq\SG(\al_1,\dots,\al_n)\}=\chi(\al_1,\dots,\al_n)+\Nbb,$$
and the above shows that in the case $n=2$, we have
    $$\Frob(\al_1,\al_2)=(\al_1-1)(\al_2-1)+\Nbb.$$
Using this new statement of the classical Frobenius problem, we can generalize to certain commutative rings with unity in the spirit of Johnson and Looper's paper \cite{FrobInDoms}:

\begin{definition}\label{Frobenius Template Definition}
Let $R$ be a commutative ring with unity that is finitely generated as a $\Zbb$-module. Then we define a \emph{Frobenius template} (or simply \emph{template}) for $R$ to be a triple $\Tcal=(A,C,U)$ consisting of
\begin{enumerate}[label=(\arabic*)]
    \item a subset $A\subseteq R$ containing $1$;
    \item a nonzero additive monoid $C\subseteq R$; and
    \item a function $U$ that assigns to each collection of nonzero distinct elements $\al_1,\dots,\al_n\in A$ that generate $R$ as a $\Zbb$-module, an additive submonoid $U(\al_1,\dots,\al_n)$ of $R$ for which 
        $$U(\al_1,\dots,\al_n)\supseteq\SG_\Tcal(\al_1,\dots,\al_n)\coloneqq\left\{\left.\sum_{i=1}^{n}x_i\al_i \, \right|  x_1,\dots,x_n\in C\right\}.$$
\end{enumerate}
We call the Frobenius template $\Tcal=(A,C,U)$ \emph{Frobenius} if for all nonzero $\al_1,\dots,\al_n\in A$ that generate $R$ as a $\Zbb$-module, there is some $w\in\SG_\Tcal(\al_1,\dots,\al_n)$ for which 
    $$w+U(\al_1,\dots,\al_n)\subseteq\SG_\Tcal(\al_1,\dots,\al_n).$$
\end{definition}

\noindent
This notion of a Frobenius template is slightly different from what originally appeared in \cite{FrobInDoms}, since we have replaced the notion of $\al_1,\dots,\al_n\in A$ being coprime (i.e., having no common non-unit divisors) with the much stronger notion of $\al_1,\dots,\al_n\in A$ generating $R$ as a $\Zbb$-module. Of course, an arbitrary ring $R$ may not be finitely generated as a $\Zbb$-module, so this assumption is added into the above definition in order to avoid having a useless concept in cases where this fails. While the rest of this paper deals with cases where $R$ has characteristic zero and is a finitely generated free $\Zbb$-module, it could be interesting to consider Frobenius templates for rings in which one of these conditions fails, in which case if $R$ is not finitely generated as a $\Zbb$-module, then the definition of the Frobenius template would have to be altered in order to allow for an infinite indexed family of elements $\{\al_i\}_{i\in I}\subseteq A$. Another key property that all rings considered in Frobenius templates in this paper will have is that they are a subset of the real numbers, so they inherit the standard total ordering that $\Rbb$ has. It could also be interesting to consider rings without this property. \\
\\
Based on the above results about rephrasing the classical Frobenius problem in terms of determining certain semigroups, we can generalize the Frobenius problem to some rings:

\begin{definition}\label{Frobenius Problem Definition}
If a template $\Tcal=(A,C,U)$ for a ring $R$ is Frobenius, then the \emph{Frobenius problem} associated to $\Tcal$ is to determine, for each collection of nonzero elements $\al_1,\dots,\al_n\in A$ that generate $R$ as a $\Zbb$-module, the \emph{Frobenius semigroup} 
    $$\Frob_\Tcal(\al_1,\dots,\al_n)\coloneqq\{w\in\SG_\Tcal(\al_1,\dots,\al_n) \ | \ w+U(\al_1,\dots,\al_n)\subseteq\SG_\Tcal(\al_1,\dots,\al_n)\}.$$
We will often drop the subscript $\Tcal$ if the template in use is clear from context. 
\end{definition}

\noindent
Given a Frobenius template $\Tcal=(A,C,U)$ over a ring $R$ and nonzero elements $\al_1,\dots,\al_n\in A$ that generate $R$ as a $\Zbb$-module, the requirement that $C\subseteq R$ is a monoid shows that $\SG(\al_1,\dots,\al_n)$ is also a monoid. It is then an immediate consequence of definition \ref{Frobenius Problem Definition} that the following are equivalent:
    $$0\in\Frob_\Tcal(\al_1,\dots,\al_n) \iff \SG_\Tcal(\al_1,\dots,\al_n)=U(\al_1,\dots,\al_n) \iff \Frob_\Tcal(\al_1,\dots,\al_n)=\SG_\Tcal(\al_1,\dots,\al_n).$$
From definition \ref{Frobenius Problem Definition}, we see that the classical Frobenius problem is about the ring $R=\Zbb$, and the Frobenius template in question is $\Tcal=(\Nbb,\Nbb,\Nbb)$ (where the third $\Nbb$ is the constant function that assigns to any tuple of natural numbers the submonoid $\Nbb$ of $\Zbb$), since nonzero integers $\al_1,\dots,\al_n\in\Nbb$ generate $\Zbb$ as a $\Zbb$-module if and only if they are coprime. Work has been done on finding and studying certain interesting Frobenius templates (with the requirement of elements generating the ring as a $\Zbb$-module replaced with other requirements) for the ring $R=\Zbb\left[\sqrt{m}\right]$, where $m\in\Zbb$ is not a square, in \cite{FrobGaussian}, \cite{2kindsofFrob}, \cite{FrobInDoms}, \cite{FrobPrimes},  \cite{2variableFrob}, and \cite{SimpFrob}. \\
\\
In this paper, we look into creating and studying an interesting Frobenius template for the ring of integers of a number field that is a subfield of the real numbers (henceforth, such number fields will be referred to as \emph{real number fields}). Some results are based on similar results in \cite{TotallyReal}, but we weaken the restriction there of the number field being totally real to that of the number field being a subfield of the real numbers. 

\begin{definition}
Let $K$ be a real number field with ring of integers $\Ofrak_K$, and define $\Ofrak_K^+=\Ofrak_K\cap[0,\infty)$. For any $\al_1,\dots,\al_n\in\Ofrak_K^+$ that generate $\Ofrak_K$ as a $\Zbb$-module, let the \emph{positive rational cone} generated by $\al_1,\dots,\al_n$ be the set
    $$C_\Qbb(\al_1,\dots,\al_n)=\left\{\left.\sum_{i=1}^{n}x_i\al_i \, \right|  x_1,\dots,x_n\in \Qbb_{\geqslant0}\right\}\subseteq K\cap[0,\infty).$$
Let $C_\Qbb\cap\Ofrak_K$ denote the function that assigns to each such collection $\al_1,\dots,\al_n$ the submonoid $C_\Qbb(\al_1,\dots,\al_n)\cap\Ofrak_K$ of $\Ofrak_K^+$. 
\end{definition}

\noindent
With the above notation, we have that $C_\Qbb(\al_1,\dots,\al_n)\cap\Ofrak_K\supseteq\Nbb\al_1+\cdots+\Nbb\al_n$, so we can form the Frobenius template 
    $$\Tcal=(\Ofrak_K^+,\Nbb,C_\Qbb\cap\Ofrak_K).$$
Note that if we take $K=\Qbb$ then $\Ofrak_K=\Zbb$, $\Ofrak_K^+=\Nbb$, and $C_\Qbb=\Nbb$,
so $C_\Qbb\cap\Ofrak_K=\Nbb$ (i.e., it assigns to any such collection $\al_1,\dots,\al_n$ the submonoid $\Nbb\subseteq\Zbb$). Hence this Frobenius template reduces down to the classical Frobenius template when $K=\Qbb$. We now arrive at the first main theorem of this paper.

\begin{restatable}[]{theorem}{FrobTemplate}
\label{FrobTemplate}
Let $K$ be a real number field. Then the template $\Tcal=(\Ofrak_K^+,\Nbb,C_\Qbb\cap\Ofrak_K)$ is Frobenius.
\end{restatable}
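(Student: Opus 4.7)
My plan is to produce a single $w \in \SG_\Tcal(\al_1, \ldots, \al_n) = \Nbb\al_1 + \cdots + \Nbb\al_n$ such that $w + (C_\Qbb(\al_1, \ldots, \al_n) \cap \Ofrak_K) \subseteq \SG_\Tcal(\al_1, \ldots, \al_n)$ by reducing the problem to finitely many ``worst cases'' via a floor/fractional-part decomposition, and then handling each such case by exploiting the hypothesis that $\al_1,\ldots,\al_n$ generate $\Ofrak_K$ as a $\Zbb$-module.

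For the reduction, given any $\beta \in C_\Qbb(\al_1,\ldots,\al_n) \cap \Ofrak_K$, I would fix some representation $\beta = \sum_i q_i \al_i$ with $q_i \in \Qbb_{\geqslant 0}$ and split each $q_i = \lfloor q_i \rfloor + \{q_i\}$, so that $\beta = \sum_i \lfloor q_i \rfloor \al_i + \eta$ where $\eta := \sum_i \{q_i\} \al_i$. The first summand manifestly lies in $\SG_\Tcal$, and $\eta$ lies in $\Ofrak_K$ as a difference of two elements of $\Ofrak_K$. The crucial point is that $\{q_i\} \in [0,1)$ forces $|\sigma(\eta)| \leqslant \sum_i |\sigma(\al_i)|$ for \emph{every} embedding $\sigma : K \hookrightarrow \Cbb$, so under the Minkowski embedding $\iota : K \hookrightarrow \Rbb^{r_1} \times \Cbb^{r_2}$ (in which $\Ofrak_K$ is a full-rank lattice) the image $\iota(\eta)$ is confined to a fixed bounded box. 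Since a lattice meets any bounded set in finitely many points, the collection $S$ of all possible such remainders is a finite subset of $\Ofrak_K$.

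Next, for each individual $\eta \in S$ I would use the generation hypothesis to write $\eta = \sum_i z_i^{(\eta)} \al_i$ with $z_i^{(\eta)} \in \Zbb$ (signs permitted), and set $w_\eta := \sum_i \max\!\big(-z_i^{(\eta)},0\big)\,\al_i \in \SG_\Tcal$. A direct calculation then gives $w_\eta + \eta = \sum_i \max\!\big(z_i^{(\eta)},0\big)\,\al_i \in \SG_\Tcal$. Taking $w := \sum_{\eta \in S} w_\eta \in \SG_\Tcal$, the three-term decomposition
\[
w + \beta \;=\; \Big(\sum_{\eta' \neq \eta} w_{\eta'}\Big) \;+\; \sum_i \lfloor q_i \rfloor \al_i \;+\; \big(w_\eta + \eta\big)
\]
exhibits $w+\beta$ as a sum of three elements of $\SG_\Tcal$, which closes under addition, finishing the argument.

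The delicate step of this plan is the finiteness of $S$: one must remember to apply the triangle inequality for \emph{every} complex embedding of $K$ simultaneously, not just the distinguished inclusion $K \subseteq \Rbb$ (a single real bound is far too weak because units of $\Ofrak_K$ can shrink under one embedding while growing under others), and then invoke the discreteness of $\iota(\Ofrak_K)$ as a lattice. Once this uniform bound and lattice-discreteness are in hand, the construction of each $w_\eta$ is a bookkeeping move of separating the positive and negative coefficients in a $\Zbb$-expansion of $\eta$ in $\al_1,\ldots,\al_n$, and the combination into a single $w$ is straightforward.
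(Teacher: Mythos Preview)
Your argument is correct and takes a genuinely different route from the paper's proof. The paper fixes an integral basis $\be_1,\dots,\be_d$, forms the integer matrix $A\in\Zbb^{d\times n}$ whose columns are the coordinate vectors of the $\al_i$, verifies that $A$ satisfies the hypotheses \ref{I1} and \ref{I2} of Lemma~\ref{lemma 1.1, [AH10]} (a result imported from \cite{integerknaps}), and then deduces the existence of a suitable $w$ from that lemma; a separate preliminary argument handles the case $n=d$. By contrast, your floor/fractional-part decomposition together with the Minkowski bound $|\sigma(\eta)|\leqslant\sum_i|\sigma(\al_i)|$ for every embedding $\sigma$ yields directly that the set of possible remainders $\eta$ is a finite subset of the lattice $\Ofrak_K$, after which absorbing each $\eta$ into $\SG_\Tcal$ via its $\Zbb$-expansion in the $\al_i$ is routine. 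Your approach is entirely self-contained (no appeal to the external knapsack lemma), treats all $n\geqslant d$ uniformly with no case split, and in fact never uses the positivity of the $\al_i$ under the distinguished real embedding---it only needs that the $\al_i$ generate $\Ofrak_K$ over $\Zbb$. The paper's approach, on the other hand, ties the problem explicitly to the integer-programming framework of \cite{integerknaps}, which is what later sections exploit when discussing quantitative bounds on $\#\Mfrak(\al_1,\dots,\al_n)$; your argument, while cleaner for mere existence, does not by itself suggest how to control the size of $w$.
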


\noindent
After proving this, we begin to look at the structure of the Frobenius semigroup $\Frob(\al_1,\dots,\al_n)$ in the template $(\Ofrak_K^+,\Nbb,C_\Qbb\cap\Ofrak_K)$. In particular, we define a partial ordering on $\Frob(\al_1,\dots,\al_n)$ and show that $\Frob(\al_1,\dots,\al_n)$ contains maximal elements with respect to this ordering. Furthermore, we show that the set $\Mfrak(\al_1,\dots,\al_n)$ of all such maximal elements satisfies the following:

\begin{restatable}[]{theorem}{FinitelyManyMaximalElements}\label{FinitelyManyMaximalElements}
Let $K$ be a real number field and $\al_1,\dots,\al_n\in\Ofrak_K^+$ be nonzero elements that generate $\Ofrak_K$ as a $\Zbb$-module. Then $\Mfrak(\al_1,\dots,\al_n)$ is a finite set, and $\Frob(\al_1,\dots,\al_n)$ is equal to the finite union
    $$\Frob(\al_1,\dots,\al_n)=\bigcup_{\mu\in\Mfrak(\al_1,\dots,\al_n)}(\mu+C_\Qbb(\al_1,\dots,\al_n)\cap\Ofrak_K).$$
\end{restatable}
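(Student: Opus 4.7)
My plan is to reduce the finiteness of $\Mfrak(\al_1,\dots,\al_n)$ to Dickson's Lemma applied to $\Nbb^n$, via the surjection $\Psi:\Nbb^n\to\SG(\al_1,\dots,\al_n)$ defined by $\Psi(p_1,\dots,p_n)=\sum_{i=1}^{n}p_i\al_i$. Since $\Frob(\al_1,\dots,\al_n)\subseteq\SG(\al_1,\dots,\al_n)$ by definition, the pullback $T=\Psi^{-1}(\Frob(\al_1,\dots,\al_n))$ is a well-defined subset of $\Nbb^n$.

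The first step is to establish the closure property $\Frob(\al_1,\dots,\al_n)+(C_\Qbb(\al_1,\dots,\al_n)\cap\Ofrak_K)\subseteq\Frob(\al_1,\dots,\al_n)$, which is immediate from Definition \ref{Frobenius Problem Definition}: for $w\in\Frob$ and $c\in C_\Qbb\cap\Ofrak_K$, we have $(w+c)+(C_\Qbb\cap\Ofrak_K)\subseteq w+(C_\Qbb\cap\Ofrak_K)\subseteq\SG$, so $w+c\in\Frob$. This closure forces $T$ to be upward closed in $\Nbb^n$ under the coordinatewise order: if $p\in T$ and $q\geq p$, then $\Psi(q)=\Psi(p)+\sum_i(q_i-p_i)\al_i\in\Frob+\SG\subseteq\Frob$. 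Dickson's Lemma then yields finitely many coordinatewise minimal elements $p^{(1)},\dots,p^{(r)}$ of $T$, with $T=\bigcup_{s=1}^{r}(p^{(s)}+\Nbb^n)$. Setting $\nu_s=\Psi(p^{(s)})$ and using the closure property to enlarge $\SG$ to $C_\Qbb\cap\Ofrak_K$ in the image, I obtain the finite decomposition
    $$\Frob(\al_1,\dots,\al_n)=\bigcup_{s=1}^{r}\bigl(\nu_s+C_\Qbb(\al_1,\dots,\al_n)\cap\Ofrak_K\bigr).$$

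To finish, I would identify $\Mfrak(\al_1,\dots,\al_n)$ with the maximal elements of the finite list $\{\nu_1,\dots,\nu_r\}$ under the given partial ordering. Given $\mu\in\Mfrak$, the decomposition writes $\mu=\nu_s+c$ for some $s$ and some $c\in C_\Qbb\cap\Ofrak_K$; since $\mu-\nu_s=c$ and $\nu_s\in\Frob$, the case $c\neq 0$ would place $\nu_s$ strictly above $\mu$ in the partial order, contradicting maximality. So $\mu=\nu_s$, giving $\Mfrak\subseteq\{\nu_1,\dots,\nu_r\}$ and hence the finiteness claim. For the union formula, a short chain-lifting argument on the finite set $\{\nu_1,\dots,\nu_r\}$ shows that every non-maximal $\nu_s$ is strictly $\preceq$-below some other $\nu_t$; iterating inside this finite set terminates at an element of $\Mfrak$, and the comparability gives $\nu_s+(C_\Qbb\cap\Ofrak_K)\subseteq\mu+(C_\Qbb\cap\Ofrak_K)$ for some $\mu\in\Mfrak$. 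Substituting into the decomposition above yields the stated formula.

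The only genuinely nontrivial ingredient is Dickson's Lemma, which packages the well-quasi-ordering of $\Nbb^n$ into the finite generation of any upward-closed subset. The step that I anticipate requiring the most care is tracking the direction of the partial order: because $y\preceq x$ is defined by $y-x\in C_\Qbb\cap\Ofrak_K$, elements larger as real numbers are smaller in $\preceq$, and it is exactly this reversal, together with the pointedness of $C_\Qbb\cap\Ofrak_K$ (which follows from $\al_1,\dots,\al_n\in[0,\infty)$), that makes the extracted $\nu_s$'s dominate the rest of $\Frob$ in the sense required by the theorem.
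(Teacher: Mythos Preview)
Your proposal is correct and rests on the same core idea as the paper: pull $\Frob(\al_1,\dots,\al_n)$ back along the surjection $\Nbb^n\to\SG(\al_1,\dots,\al_n)$ and apply Dickson's Lemma to the preimage. The paper calls this preimage $W$ and shows that every lift of a maximal element is coordinatewise minimal in $W$, which is exactly your observation that $T$ is upward closed.

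The organization differs in one useful respect. The paper first establishes the existence of maximal elements and the union formula separately (its Lemma~\ref{max elements of Frob} uses a Zorn's-lemma argument, and Lemma~\ref{Frob as bad union} then records the decomposition), and only afterwards invokes Dickson for finiteness. You instead extract the finite decomposition $\Frob=\bigcup_s(\nu_s+C_\Qbb\cap\Ofrak_K)$ directly from Dickson and then read off both the finiteness of $\Mfrak$ and the union formula from that single finite list. This bypasses the Zorn argument entirely (which is harmless here since the relevant chains are finite anyway) and is a little more economical. One small point to make explicit when you write it out: $T\neq\es$ because $\Frob(\al_1,\dots,\al_n)\neq\es$ by Theorem~\ref{FrobTemplate}, so Dickson applies.
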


\noindent
After establishing these properties of the Frobenius semigroups, we give an explicit calculation of certain Frobenius semigroups for real quadratic number fields. Lastly, we show the remarkable result that in general, the size of $\Mfrak(\al_1,\dots,\al_n)$ can be unbounded, even if $n$ is fixed.

\section{Some Preliminary Results}\label{Some preliminary results section}
For completeness, we give a summary of results from \cite{integerknaps} that are used in this paper, and also some general results about matrices that will be used later on. Let $A\in\Zbb^{d\times n}$, $1\leqslant d< n$, be a matrix satisfying
\begin{enumerate}[label=(I\arabic*), leftmargin=\widthof{[Step-]}+\labelsep]
    \item\label{I1} $\gcd\{\det(A') \mid A' \text{ is a } d\times d \text{ minor of } A\}=1$;
    \item\label{I2} $\left\{\left.x\in\Rbb_{\geqslant0}^n \, \right|  Ax=0\right\}=0.$
\end{enumerate}
Let $\Fcal(A)\subseteq\Zbb^d$ denote the set
    $$\Fcal(A)=\{Ax \mid x\in\Nbb^n\},$$
let
    $$C_\Rbb(A)=\left\{Ax \left| \, x\in\Rbb^n_{\geqslant0}\!\right.\right\}$$
be the positive cone generated by the columns of $A$, and  similarly let 
    $$C_\Qbb(A)=\left\{Ax \left| \, x\in\Qbb^n_{\geqslant0}\!\right.\right\}$$
be the positive rational cone generated by the columns of $A$. Then the following is a slightly weaker version of Lemma 1.1 in \cite{integerknaps}.
\begin{lemma}[Lemma 1.1, \cite{integerknaps}]\label{lemma 1.1, [AH10]}
Let $A\in\Zbb^{d\times n}$, $1\leqslant d< n$, be an integral matrix satisfying conditions \ref{I1} and \ref{I2}. Then  for any integer vector  $w\in\inte(C_\Rbb(A))\cap\Zbb^d$ (where $\inte(C_\Rbb(A))=\{Ax \mid x\in\Rbb^n_{>0}\}$), there is some positive number $N\geqslant0$ so that if $t\geqslant N$, then 
    $$(tw+C_\Rbb(A))\cap\Zbb^d\subseteq\Fcal(A).$$
\end{lemma}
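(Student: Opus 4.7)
The plan is to reduce the claim, for each $v\in(tw+C_\Rbb(A))\cap\Zbb^d$, to producing a preimage $z\in\Nbb^n$ with $Az=v$. I would build such a $z$ by first obtaining an integer preimage of $v$ (using surjectivity of $A:\Zbb^n\to\Zbb^d$) and then translating it by an element of the integer kernel lattice $L:=\ker(A)\cap\Zbb^n$ to ensure nonnegativity. The nonnegativity adjustment is made possible by the hypothesis $w\in\inte(C_\Rbb(A))$, which forces any preimage of $v$ to be ``deep'' inside the nonnegative orthant once $t$ is large enough.

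For the surjectivity step, I would invoke Smith normal form: write $A=U\cdot[\diag(d_1,\dots,d_d)\ \mathbf{0}]\cdot V$ with $U\in\GL_d(\Zbb)$ and $V\in\GL_n(\Zbb)$. Since $d_1\cdots d_d$ equals the gcd of the $d\times d$ minors of $A$, condition \ref{I1} forces every $d_i=1$, so $A(\Zbb^n)=\Zbb^d$. Consequently $L$ is a lattice of rank $n-d$ inside the real subspace $W:=\{x\in\Rbb^n\mid Ax=0\}$. Let $C$ denote the smallest constant such that for every $q\in W$ there is some $\ell\in L$ with $|q_i-\ell_i|\leqslant C$ for all $i$; this $C$ is finite by standard lattice theory (the fundamental parallelepiped of $L$ is bounded in $W$). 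The same bound then holds on every affine coset of $L$ inside every affine translate of $W$.

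Using the hypothesis, fix $x^*\in\Rbb_{>0}^n$ with $Ax^*=w$, set $m:=\min_i x_i^*>0$, and define $N:=C/m$. For $t\geqslant N$ and $v\in(tw+C_\Rbb(A))\cap\Zbb^d$, write $v-tw=Ay$ with $y\in\Rbb_{\geqslant0}^n$ and put $p:=tx^*+y$; then $Ap=v$ and $p_i\geqslant tx_i^*\geqslant tm\geqslant C$ for every $i$. The fiber $A^{-1}(v)\cap\Zbb^n$ is a coset of $L$ lying inside the affine subspace $p+W$, so by the covering bound above there exists $z\in A^{-1}(v)\cap\Zbb^n$ with $|z_i-p_i|\leqslant C$ for all $i$. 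Hence $z_i\geqslant p_i-C\geqslant0$, giving $z\in\Nbb^n$ and $v=Az\in\Fcal(A)$.

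The main technical obstacle is the surjectivity step; once $A(\Zbb^n)=\Zbb^d$ is secured, the covering-radius argument proceeds cleanly and the constant $N$ is essentially the covering radius of $L$ divided by the smallest coordinate of a strictly positive real preimage of $w$. Condition \ref{I2} does not appear explicitly in the argument above, but it guarantees that the cone $C_\Rbb(A)$ is pointed and, together with \ref{I1}, underwrites the equality $\inte(C_\Rbb(A))=\{Ax\mid x\in\Rbb^n_{>0}\}$ taken for granted in the statement.
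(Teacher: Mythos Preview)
The paper does not actually prove this lemma; it is quoted as (a slightly weaker form of) Lemma~1.1 of \cite{integerknaps}, so there is no in-paper argument to compare against. Your proof is correct and is essentially the standard ``covering radius'' argument one finds for results of this type: Smith normal form plus condition~\ref{I1} gives surjectivity $A\Zbb^n=\Zbb^d$, the integer kernel $L$ is a full-rank lattice in $W=\ker_\Rbb(A)$ with finite sup-norm covering radius $C$, and then any real preimage $p=tx^*+y$ of $v$ has all coordinates at least $tm\geqslant C$, so the nearest integer point in the fiber is nonnegative. One small remark: your closing comment that \ref{I2} ``underwrites'' the description of $\inte(C_\Rbb(A))$ is unnecessary, since that description is simply the definition given in the statement; in fact your argument never uses \ref{I2}, which is consistent with the lemma as stated (condition~\ref{I2} is needed elsewhere in the paper, not here).
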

\noindent
Note that if we take $t\in\Nbb$ sufficiently large in the above lemma, then $tw\in\Zbb^d$, so Lemma \ref{lemma 1.1, [AH10]} shows that we will have 
    $$tw+C_\Rbb(A)\cap\Zbb^d\subseteq\Fcal(A)$$
for all large enough positive integers $t$. \\
\\
The following lemma, whose proof is based on \cite{133077}, will be important in the proof of Theorem \ref{FrobTemplate}.

\begin{lemma}\label{surjective implies gcd of det of minors is 1} 
Let $R$ be a commutative ring with unity and $A\in R^{d\times n}$ be a matrix for which the corresponding $R$-module homomorphism $R^n\to R^d$ is surjective. If $A_1,\dots,A_N$ denote the $d\times d$ minors of $A$ (i.e., the matrices resulting from selecting $d$ distinct columns of $A$), then $\det(A_1),\dots,\det(A_N)$ generate the unit ideal in $R$. In particular, if $R=\Zbb$ then $\gcd(\det(A_1),\dots,\det(A_N))=1$.  
\end{lemma}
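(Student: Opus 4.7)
The plan is to use the splitting of a short exact sequence together with the Cauchy--Binet formula. Since the $R$-linear map $A:R^n\to R^d$ is surjective and $R^d$ is free (and hence projective), the surjection $A$ admits a right inverse, i.e., there exists a matrix $B\in R^{n\times d}$ such that $AB=I_d$. This is the only real input from module theory; everything else becomes a polynomial identity valid over any commutative ring with unity.

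Next, I would invoke the Cauchy--Binet formula, which states that for any $A\in R^{d\times n}$ and $B\in R^{n\times d}$ with $d\leqslant n$,
$$\det(AB)=\sum_{S}\det(A_S)\det(B^S),$$
where $S$ ranges over all $d$-element subsets of $\{1,\dots,n\}$, $A_S$ is the $d\times d$ submatrix of $A$ formed by the columns indexed by $S$, and $B^S$ is the $d\times d$ submatrix of $B$ formed by the rows indexed by $S$. This is a universal polynomial identity in the entries of $A$ and $B$, so it holds over any commutative ring with unity. Applying it with our particular $B$ gives
$$1=\det(I_d)=\det(AB)=\sum_{S}\det(A_S)\det(B^S).$$
Since the right-hand side is an $R$-linear combination of the determinants $\det(A_1),\dots,\det(A_N)$ of the $d\times d$ minors of $A$, the element $1$ lies in the ideal they generate, which is therefore the unit ideal.

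The specialization to $R=\Zbb$ is then immediate: an ideal in $\Zbb$ generated by $\det(A_1),\dots,\det(A_N)$ is the unit ideal if and only if $\gcd(\det(A_1),\dots,\det(A_N))=1$, so this gives condition \ref{I1} once surjectivity is known.

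The main thing to be careful about is the existence of the right inverse $B$ over a general commutative ring, but this is standard: a surjection onto a free module splits. The Cauchy--Binet step is a formal identity and requires no additional hypotheses beyond commutativity of $R$. So there is no serious obstacle; the proof is essentially a two-line argument once one recognizes that Cauchy--Binet is available.
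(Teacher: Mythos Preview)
Your proposal is correct and follows essentially the same approach as the paper: split the surjection using projectivity of $R^d$ to obtain a right inverse $B$ with $AB=I_d$, then apply the Cauchy--Binet formula to write $1$ as an $R$-linear combination of the $d\times d$ minors of $A$. The only additional remark you make (that Cauchy--Binet is a universal polynomial identity and hence valid over any commutative ring) is a helpful clarification the paper leaves implicit.
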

\begin{proof}
Because the $R$-module homomorphism $R^n\to R^d$ corresponding to $A$ is surjective, and because free $R$-modules are projective, we know that there is an $R$-module homomorphism $R^d\to R^n$ for which 
\begin{center}
    \begin{tikzcd}[column sep=50, row sep=40]
        & \arrow[dl, dashed]R^d \arrow[d]& \\
        R^n \arrow[r, "\textstyle A"] & R^d \arrow[r] & 0
    \end{tikzcd}
\end{center}
commutes, where the map $R^d\to R^d$ is the identity. Hence there is a matrix $B\in R^{n\times d}$ corresponding to the map $R^d\to R^n$ for which $AB=I$ is the $d\times d$ identity matrix. If $B_1,\dots,B_N$ denote the $d\times d$ minors of $B$ (i.e., the matrices resulting from selecting $d$ distinct rows of $B$), then the Cauchy-Binet formula shows that 
    $$1=\det(I)=\det(AB)=\sum_{i=1}^{N}\det(A_i)\det(B_i),$$
so $\det(A_1)R+\cdots+\det(A_N)R=R$. \\
\\
If $R=\Zbb$ then we know that $\det(A_1)\Zbb+\cdots+\det(A_N)\Zbb=\Zbb$, so it follows that $\gcd(\det(A_1),\dots,\det(A_N))=1$. 
\end{proof}

\section{The Frobenius Problem}\label{The Frobenius Problem Section}
Using the results of section \ref{Some preliminary results section}, we can prove the first main theorem of this paper. This proof is based on ideas present in the proof of Theorem 1.1 in \cite{TotallyReal}.
\FrobTemplate*
\begin{proof}
Fix elements $\al_1,\dots,\al_n\in\Ofrak_K^+$ that generate $\Ofrak_K$ as a $\Zbb$-module. Then we know that $n\geqslant[K:\Qbb]$, which is the rank of $\Ofrak_K$ as a $\Zbb$-module. We first deal with the case where $n=[K:\Qbb]$, meaning $\al_1,\dots,\al_n$ span the free $\Zbb$-module $\Ofrak_K$ of rank $n$, and are thus a basis for $\Ofrak_K$ as a $\Zbb$-module. In this case, if $\be\in C_\Qbb(\al_1,\dots,\al_n)\cap\Ofrak_K$, we have
    $$\be=\sum_{i=1}^{n}x_i\al_i=\sum_{i=1}^{n}y_i\al_i,$$
where $x_1,\dots,x_n\in\Qbb_{\geqslant0}$ and $y_1,\dots,y_n\in\Zbb$. Then the fact that $K$ is the field of fractions of $\Ofrak_K$, and that $\Qbb$ is a flat $\Zbb$-module, shows that $\al_1,\dots,\al_n$ is also a basis for $K$ as a $\Qbb$-vector space, and in particular, they are $\Qbb$-linearly independent. Hence each $x_i=y_i$, so each $x_i\in\Nbb$, which means that $\be\in\SG(\al_1,\dots,\al_n)$, and thus
    $$C_\Qbb(\al_1,\dots,\al_n)\cap\Ofrak_K=\SG(\al_1,\dots,\al_n).$$
Then $\Frob(\al_1,\dots,\al_n)=\SG(\al_1,\dots,\al_n)$, so it is nonempty. \\
\\
Now suppose that $[K:\Qbb]\coloneqq d<n$, and let $\be_1,\dots,\be_d\in\Ofrak_K$ be a basis for $\Ofrak_K$ as a $\Zbb$-module. Let $a_{ij}\in\Zbb, i=1,\dots,n,j=1,\dots,d$, be the unique integers for which
    $$\al_i=\sum_{j=1}^{d}a_{ij}\be_j.$$
Let $\vp:\Ofrak_K\to\Zbb^d$ be the isomorphism associated to the $\Zbb$-basis $\be_1,\dots,\be_d$, so $\vp(\al_i)=(a_{i1},\dots,a_{id})$, and let $A=(a_{ij})^\mathrm{T}\in\Zbb^{d\times n}$ be the matrix whose columns are the $\vp(\al_i)$.
Note that $\vp:\Ofrak_K\to\Zbb^d$ is the restriction of the vector space isomorphism $K\to\Qbb^d$ associated to the same basis $\be_1,\dots,\be_d$ for $K$ as a $\Qbb$-vector space.
Let $\si_1,\dots,\si_d:K\hookrightarrow\Cbb$ be the $d$ distinct embeddings of $K$ into $\Cbb$, and let $\si:K\to\Cbb^d$ be the Minkowski embedding of $K$ into $\Cbb^d$, given by 
    $$\si(x)=(\si_1(x),\dots,\si_d(x))$$
for $x\in K$. Note that $\si$ is a $\Qbb$-linear map because each $\si_i$ must fix $\Qbb$. Let $B=(\si_i(\be_j))\in\Cbb^{d\times d}$ and $C=(\si_i(\al_j))\in\Cbb^{d\times n}$ be the matrices whose columns are the vectors $\si(\be_1),\dots,\si(\be_d)\in\Cbb^d$ and $\si(\al_1),\dots,\si(\al_n)\in\Cbb^d$, respectively. Then
    $$(BA)_{ij}=\sum_{k=1}^{d}B_{ik}A_{kj}=\sum_{k=1}^{d}\si_i(\be_k)a_{jk}=\si_i\!\left(\sum_{k=1}^{d}a_{jk}\be_k\right)=\si_i(\al_j)=C_{ij},$$
so $BA=C$. If we let $\si_1:K\hookrightarrow\Cbb$ be the inclusion of $K$ into $\Cbb$, then note that the first row of $C$ will contain all real positive entries because $\al_1,\dots,\al_n\in\Ofrak_K^+$.  \\
\\
We now claim that the matrix $A\in\Zbb^{d\times n}$ satisfies conditions \ref{I1} and \ref{I2}. Because $\al_1,\dots,\al_n$ generate $\Ofrak_K$ as a $\Zbb$-module, we know that $\Zbb\al_1+\cdots+\Zbb\al_n=\Ofrak_K$, so 
    $$\vp(\Ofrak_K)=\Zbb\vp(\al_1)+\cdots+\Zbb\vp(\al_n)=A\Zbb^n,$$
where we used the fact that the columns of $A\in\Zbb^{d\times n}$ are the $\vp(\al_1),\dots,\vp(\al_n)$. But $\vp:\Ofrak_K\to\Zbb^d$ is an isomorphism, so $\vp(\Ofrak_K)=\Zbb^d$, and thus $A\Zbb^n=\Zbb^d$, so the linear transformation $\Zbb^n\to\Zbb^d$ associated to the matrix $A$ is surjective. 
Lemma \ref{surjective implies gcd of det of minors is 1} then shows that 
    $$\gcd\{\det(A') \ | \ A' \text{ is a } d\times d \text{ minor of } A\}=1,$$
so condition \ref{I1} is satisfied. Now suppose that $x=(x_1,x_2,\dots,x_n)\in\Rbb_{\geqslant0}^n$ is such that $Ax=0$. Then
    $$BAx=Cx=\begin{pmatrix}
    \al_1 & \cdots & \al_n\\
    \si_2(\al_1) & \cdots & \si_2(\al_n)\\
    \vdots & \ddots & \vdots\\
    \si_d(\al_1) & \cdots & \si_d(\al_n)
    \end{pmatrix}\begin{pmatrix}
    x_1\\
    x_2\\
    \vdots\\
    x_n
    \end{pmatrix}=\begin{pmatrix}
    x_1\al_1+\cdots+x_n\al_n\\
    x_1\si_2(\al_1)+\cdots+x_n\si_2(\al_n)\\
    \vdots\\
    x_1\si_d(\al_1)+\cdots+x_n\si_d(\al_n)
    \end{pmatrix}=\begin{pmatrix}
    0\\
    0\\
    \vdots\\
    0
    \end{pmatrix}$$
(recall we are taking $\si_1$ to be the inclusion of $K$ into $\Cbb$). Because $\al_1,\dots,\al_n\in\Ofrak_K^+$ are nonzero, we know that each $\al_i>0$, so the fact that the $x_i\geqslant0$ shows that in order for $x_1\al_1+\dots+x_n\al_n=0$, we must have that each $x_i=0$. Hence
    $$\left\{\left.x\in\Rbb_{\geqslant0}^{n} \,\right| Ax=0\right\}=0,$$
so the matrix $A$ also satisfies condition \ref{I2}. \\
\\
We now apply Lemma \ref{lemma 1.1, [AH10]} to show that if $\ga\in\inte(C_\Qbb(\al_1,\dots,\al_n))\cap\Ofrak_K$ (where $\inte(C_\Qbb(\al_1,\dots,\al_n))$ denotes strictly positive rational linear combinations of $\al_1,\dots,\al_n$), then there is some nonzero $t\in\Nbb$ for which 
    $$t\ga+C_\Qbb(\al_1,\dots,\al_n)\cap\Ofrak_K\subseteq\SG(\al_1,\dots,\al_n).$$
If this is true, then we can take $\ga=\sum\limits_{i=1}^{n}x_i\al_i$, where $x_1,\dots,x_n\in\Nbb$ are nonzero, so $\ga\in\inte(C_\Qbb(\al_1,\dots,\al_n))\cap\Ofrak_K$ and $t\ga\in\SG(\al_1,\dots,\al_n)$, and thus $t\ga\in\Frob(\al_1,\dots,\al_n)$, so $\Frob(\al_1,\dots,\al_n)$ is nonempty. Then it would follow that for any $\al_1,\dots,\al_n\in\Ofrak_K^+$ that generate $\Ofrak_K$ as a $\Zbb$-module, the set $\Frob(\al_1,\dots,\al_n)$ is nonempty, so we would know that the template $\Tcal=\left(\Ofrak_K^+,\Nbb,C_\Qbb\cap\Ofrak_K\right)$ is Frobenius. \\
\\
In order to apply Lemma \ref{lemma 1.1, [AH10]} in this manner, we must first transition from the situation we have in $\Ofrak_K$ to the situation given in Lemma \ref{lemma 1.1, [AH10]}. Because $\vp(\al_1),\dots,\vp(\al_n)\in\Zbb^d$ are the columns of the matrix $A$, we know that 
    $$\vp(C_\Qbb(\al_1,\dots,\al_n))=C_\Qbb(A),\qquad \vp(\SG(\al_1,\dots,\al_n))=\Fcal(A),$$
and the fact that $\vp:\Ofrak_K\to\Zbb^d$ is a $\Zbb$-module isomorphism shows that $\vp(\Ofrak_K)=\Zbb^d$. It follows that if $t\in\Nbb$ and $\ga\in\inte(C_\Qbb(\al_1,\dots,\al_n))\cap\Ofrak_K$, then
\begin{align*}
    \vp(t\ga+C_\Qbb(\al_1,\dots,\al_n)\cap\Ofrak_K)&=t\vp(\ga)+\vp(C_\Qbb(\al_1,\dots,\al_n))\cap\vp(\Ofrak_K)=t\vp(\ga)+C_\Qbb(A)\cap\Zbb^d.
\end{align*}
Hence 
    $$t\ga+C_\Qbb(\al_1,\dots,\al_n)\cap\Ofrak_K\subseteq\SG(\al_1,\dots,\al_n)$$
if and only if 
    $$t\vp(\ga)+C_\Qbb(A)\cap\Zbb^d\subseteq\Fcal(A).$$
But the fact that $\ga\in\inte(C_\Qbb(\al_1,\dots,\al_n))\cap\Ofrak_K$ shows that $\vp(\ga)\in\inte(C_\Qbb(A))\cap\Zbb^d\subseteq\inte(C_\Rbb(A))\cap\Zbb^d$, so Lemma \ref{lemma 1.1, [AH10]} shows that for all sufficiently large $t\in\Nbb$, 
    $$t\vp(\ga)+C_\Qbb(A)\cap\Zbb^d\subseteq t\vp(\ga)+C_\Rbb(A)\cap\Zbb^d\subseteq\Fcal(A),$$
and thus
    \[t\ga+C_\Qbb(\al_1,\dots,\al_n)\cap\Ofrak_K\subseteq\SG(\al_1,\dots,\al_n).\qedhere\]
\end{proof}
\noindent
Now that we have shown whenever $\al_1,\dots,\al_n\in\Ofrak_K^+$ generate $\Ofrak_K$ as a $\Zbb$-module, the Frobenius semigroup
    $$\Frob(\al_1,\dots,\al_n)=\{w\in\SG(\al_1,\dots,\al_n) \mid w+C_\Qbb(\al_1,\dots,\al_n)\cap\Ofrak_K\subseteq\SG(\al_1,\dots,\al_n)\}$$
is nonempty, we can begin to describe it in certain cases. The simplest case is when $\al_1,\dots,\al_n$ are linearly independent, in which case they form an integral basis for $\Ofrak_K$. In this case, the beginning of the proof of Theorem \ref{FrobTemplate} shows that the following is true. 

\begin{lemma}\label{Frob of basis}
Let $K$ be a real number field of degree $d$, and suppose that $\be_1,\dots,\be_d\in\Ofrak_K^+$ is a basis for $\Ofrak_K$ as a $\Zbb$-module. Then
    $$\Frob(\be_1,\dots,\be_d)=\SG(\be_1,\dots,\be_d).$$
\end{lemma}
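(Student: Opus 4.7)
The plan is to observe that this lemma is essentially a restatement of the case $n = [K:\Qbb]$ that was handled in the opening paragraph of the proof of Theorem \ref{FrobTemplate}, repackaged via one of the equivalences recorded immediately after Definition \ref{Frobenius Problem Definition}. Specifically, I would invoke
$$\SG_\Tcal(\al_1,\dots,\al_n) = U(\al_1,\dots,\al_n) \iff \Frob_\Tcal(\al_1,\dots,\al_n) = \SG_\Tcal(\al_1,\dots,\al_n),$$
applied to the template $\Tcal = (\Ofrak_K^+, \Nbb, C_\Qbb \cap \Ofrak_K)$ with $n = d$, which reduces the entire claim to proving the single identity $C_\Qbb(\be_1,\dots,\be_d) \cap \Ofrak_K = \SG(\be_1,\dots,\be_d)$.

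For that identity, the containment $\supseteq$ is immediate from the definition of the template. For $\subseteq$ I would repeat the short argument from the first paragraph of the proof of Theorem \ref{FrobTemplate}: given $\ga \in C_\Qbb(\be_1,\dots,\be_d) \cap \Ofrak_K$, write $\ga = \sum_{i=1}^d x_i \be_i$ with $x_i \in \Qbb_{\geqslant 0}$ using the cone membership, and also $\ga = \sum_{i=1}^d y_i \be_i$ with $y_i \in \Zbb$ using that $\be_1,\dots,\be_d$ is a $\Zbb$-basis of $\Ofrak_K$. Because a $\Zbb$-basis of $\Ofrak_K$ is also a $\Qbb$-basis of $K = \Frac(\Ofrak_K)$ (equivalently, because $\Qbb$ is $\Zbb$-flat, so tensoring up preserves linear independence), the $\be_i$ are $\Qbb$-linearly independent, forcing $x_i = y_i$. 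Then each $x_i \in \Qbb_{\geqslant 0} \cap \Zbb = \Nbb$, so $\ga \in \SG(\be_1,\dots,\be_d)$.

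I do not anticipate any real obstacle here: the lemma is a direct corollary of work already done inside Theorem \ref{FrobTemplate}, and the only task is to promote that computation to a standalone statement via the $0 \in \Frob$ equivalence. One could alternatively phrase the proof by checking directly that $0 \in \Frob(\be_1,\dots,\be_d)$, since $0 + C_\Qbb(\be_1,\dots,\be_d) \cap \Ofrak_K = \SG(\be_1,\dots,\be_d)$ by the identity above; either formulation amounts to the same content.
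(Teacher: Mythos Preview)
Your proposal is correct and matches the paper's approach exactly: the paper does not give a standalone proof but simply states that ``the beginning of the proof of Theorem \ref{FrobTemplate} shows that the following is true,'' referring to precisely the argument you have reproduced (showing $C_\Qbb(\be_1,\dots,\be_d)\cap\Ofrak_K=\SG(\be_1,\dots,\be_d)$ via $\Qbb$-linear independence of an integral basis, and then invoking the equivalence after Definition \ref{Frobenius Problem Definition}).
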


\noindent
The following lemma is an immediate consequence of the definitions. 
\begin{lemma}\label{al in SG implies Frobs ar equal}
Let $K$ be a real number field, and suppose that $\al_1,\dots,\al_n\in\Ofrak_K^+$ generate $\Ofrak_K$ as a $\Zbb$-module. Then if $\al\in\SG(\al_1,\dots,\al_n)$,
    $$\Frob(\al_1,\dots,\al_n,\al)=\Frob(\al_1,\dots,\al_n).$$
\end{lemma}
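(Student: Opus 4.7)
The plan is to show that adjoining the element $\al$ to the generating set leaves all three ingredients of the definition of $\Frob$ invariant: the condition of generating $\Ofrak_K$ as a $\Zbb$-module, the semigroup $\SG$, and the positive rational cone $C_\Qbb$. Once these three equalities are established, the equality of the Frobenius semigroups follows directly from Definition \ref{Frobenius Problem Definition}.

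First I would note that $\al_1,\dots,\al_n,\al$ still generate $\Ofrak_K$ as a $\Zbb$-module (trivially, since $\al_1,\dots,\al_n$ already do), so that $\Frob(\al_1,\dots,\al_n,\al)$ makes sense in our template $\Tcal=(\Ofrak_K^+,\Nbb,C_\Qbb\cap\Ofrak_K)$. The hypothesis $\al\in\SG(\al_1,\dots,\al_n)$ gives a representation $\al=\sum_{i=1}^n y_i\al_i$ with $y_i\in\Nbb$, which is the key input for the two substitution arguments that follow.

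Next I would prove $\SG(\al_1,\dots,\al_n,\al)=\SG(\al_1,\dots,\al_n)$. The inclusion $\supseteq$ is immediate from taking coefficient $0$ on $\al$. For $\subseteq$, an arbitrary element $\sum_{i=1}^n x_i\al_i+x\al\in\SG(\al_1,\dots,\al_n,\al)$ becomes $\sum_{i=1}^n(x_i+xy_i)\al_i$ after substituting the expression for $\al$, and each coefficient $x_i+xy_i$ lies in $\Nbb$. The same calculation, now with $x_i,x\in\Qbb_{\geqslant 0}$ and $y_i\in\Nbb\subseteq\Qbb_{\geqslant 0}$, yields $C_\Qbb(\al_1,\dots,\al_n,\al)=C_\Qbb(\al_1,\dots,\al_n)$, and intersecting with $\Ofrak_K$ preserves this equality.

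With both the semigroup and the submonoid from the template matching on either side, the defining condition
\[
w+C_\Qbb(\al_1,\dots,\al_n,\al)\cap\Ofrak_K\subseteq\SG(\al_1,\dots,\al_n,\al)
\]
is word-for-word the same as the defining condition for $\Frob(\al_1,\dots,\al_n)$, so the two Frobenius semigroups coincide. There is no real obstacle here; the lemma is essentially a bookkeeping exercise justifying the author's remark that it is ``an immediate consequence of the definitions.''
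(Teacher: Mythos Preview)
Your proposal is correct and follows essentially the same approach as the paper: both arguments observe that $\SG(\al_1,\dots,\al_n,\al)=\SG(\al_1,\dots,\al_n)$ and $C_\Qbb(\al_1,\dots,\al_n,\al)=C_\Qbb(\al_1,\dots,\al_n)$, then conclude directly from the definition of the Frobenius semigroup. You simply spell out the substitution $\al=\sum_i y_i\al_i$ in slightly more detail than the paper does.
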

\begin{proof}
If $\al\in\SG(\al_1,\dots,\al_n)$ then we know that $\SG(\al_1,\dots,\al_n,\al)=\SG(\al_1,\dots,\al_n)$ and $C_\Qbb(\al_1,\dots,\al_n,\al)=C_\Qbb(\al_1,\dots,\al_n)$. Hence $C_\Qbb(\al_1,\dots,\al_n,\al)\cap\Ofrak_K=C_\Qbb(\al_1,\dots,\al_n)\cap\Ofrak_K$, so the definition of the Frobenius semigroup shows that 
    \[\Frob(\al_1,\dots,\al_n,\al)=\Frob(\al_1,\dots,\al_n).\qedhere\]
\end{proof}

\noindent
The converse to Lemma \ref{al in SG implies Frobs ar equal} will be true provided that the elements $\al_1,\dots,\al_n$  form a basis for $\Ofrak_K$.

\begin{corollary}
Let $K$ be a real number field and $\be_1,\dots,\be_d\in\Ofrak_K^+$ be a basis for $\Ofrak_K$ as a $\Zbb$-module. If $\al\in\Ofrak_K^+$, then 
    $$\Frob(\be_1,\dots,\be_d,\al)=\Frob(\be_1,\dots,\be_d)$$
if and only if $\al\in\SG(\be_1,\dots,\be_d)$.
\end{corollary}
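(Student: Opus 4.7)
The plan is to handle the two directions separately; the ``if'' direction is essentially free from Lemma \ref{al in SG implies Frobs ar equal}, so the real content lies in the converse.

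For the ``if'' direction, suppose $\al\in\SG(\be_1,\dots,\be_d)$. If $\al$ coincides with some $\be_i$, then the tuple $(\be_1,\dots,\be_d,\al)$ is really just a repetition and the two Frobenius semigroups agree trivially; otherwise Lemma \ref{al in SG implies Frobs ar equal} applies directly and gives $\Frob(\be_1,\dots,\be_d,\al)=\Frob(\be_1,\dots,\be_d)$.

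For the ``only if'' direction, I would exploit the equivalence stated right after Definition \ref{Frobenius Problem Definition}, namely that $0\in\Frob_\Tcal(\al_1,\dots,\al_n)$ is equivalent to $\Frob_\Tcal(\al_1,\dots,\al_n)=\SG_\Tcal(\al_1,\dots,\al_n)$. Assuming $\Frob(\be_1,\dots,\be_d,\al)=\Frob(\be_1,\dots,\be_d)$, apply Lemma \ref{Frob of basis} to rewrite the right-hand side as $\SG(\be_1,\dots,\be_d)$, which contains $0$. Hence $0\in\Frob(\be_1,\dots,\be_d,\al)$, and the equivalence above forces $\Frob(\be_1,\dots,\be_d,\al)=\SG(\be_1,\dots,\be_d,\al)$. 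Since $\al$ is trivially in $\SG(\be_1,\dots,\be_d,\al)$ (take the coefficient vector $(0,\dots,0,1)$), we conclude $\al\in\Frob(\be_1,\dots,\be_d,\al)=\Frob(\be_1,\dots,\be_d)=\SG(\be_1,\dots,\be_d)$, as desired.

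There is essentially no hard step: once one recognizes that $\Frob(\be_1,\dots,\be_d)=\SG(\be_1,\dots,\be_d)$ contains $0$, the chain of equivalences established in the introduction immediately collapses the problem. The only mild subtlety is the bookkeeping about whether $\al$ is allowed to repeat one of the $\be_i$'s in the definition of $\Frob$; this is handled as a trivial edge case. No further machinery (matrices, Minkowski embeddings, Lemma \ref{lemma 1.1, [AH10]}, etc.) is needed, since the basis hypothesis on $\be_1,\dots,\be_d$ has already done all the heavy lifting via Lemma \ref{Frob of basis}.
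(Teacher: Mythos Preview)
Your proof is correct and follows essentially the same route as the paper: invoke Lemma~\ref{al in SG implies Frobs ar equal} for the ``if'' direction, and for the converse use Lemma~\ref{Frob of basis} to get $0\in\Frob(\be_1,\dots,\be_d,\al)$, then apply the equivalence after Definition~\ref{Frobenius Problem Definition} to conclude $\SG(\be_1,\dots,\be_d,\al)=\SG(\be_1,\dots,\be_d)$. The only cosmetic difference is that the paper finishes by equating the two $\SG$ sets directly, whereas you trace $\al$ through the chain $\SG(\be_1,\dots,\be_d,\al)=\Frob(\be_1,\dots,\be_d,\al)=\Frob(\be_1,\dots,\be_d)=\SG(\be_1,\dots,\be_d)$; both are equivalent.
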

\begin{proof}
The `if' part is handled by Lemma \ref{al in SG implies Frobs ar equal}, so suppose that $\Frob(\be_1,\dots,\be_d,\al)=\Frob(\be_1,\dots,\be_d)$. Then by Lemma \ref{Frob of basis}, we know that $\Frob(\be_1,\dots,\be_d,\al)=\SG(\be_1,\dots,\be_d)$, so in particular, $0\in\Frob(\be_1,\dots,\be_d,\al)$, and thus $\Frob(\be_1,\dots,\be_d,\al)=\SG(\be_1,\dots,\be_d,\al)$. It follows that  $\SG(\be_1,\dots,\be_d,\al)=\SG(\be_1,\dots,\be_d)$, so we have that $\al\in\SG(\be_1,\dots,\be_d)$. 
\end{proof}
\noindent
It is not always true that
    $$\Frob(\al_1,\dots,\al_n)\subseteq\Frob(\al_1,\dots,\al_n,\al)$$
for $\al_1,\dots,\al_n,\al\in\Ofrak_K^+$. Suppose that $\be_1,\dots,\be_d\in\Ofrak_K^+$ is a basis for $\Ofrak_K$ as a $\Zbb$-module, and $\al\in\Ofrak_K^+$ is such that $0\notin\Frob(\be_1,\dots,\be_d,\al)$. Then $0\in\Frob(\be_1,\dots,\be_d)$, so $\Frob(\be_1,\dots,\be_d)\not\subseteq\Frob(\be_1,\dots,\be_d,\al)$. See section \ref{An Example of a Simple Frobenius Set For Quadratic Extensions Section} for an explicit example of a real number field $K$ and such an $\al\in\Ofrak_K^+$, as well as an example of elements $\al_1,\dots,\al_n\in\Ofrak_K^+$ that generate $\Ofrak_K$ as a $\Zbb$-module, are not a basis for $\Ofrak_K$, and for which $\Frob(\al_1,\dots,\al_n)=\SG(\al_1,\dots,\al_n)$.

\section{The Structure of the Frobenius Set}\label{The Structure of the Frobenius Set Section}
Let $K$ be a real number field and fix $\al_1,\dots,\al_n\in\Ofrak_K^+$ that generate $\Ofrak_K$ as a $\Zbb$-module. Define a partial ordering $\preccurlyeq$ on $\Frob(\al_1,\dots,\al_n)$ by $w\preccurlyeq v$ for $w,v\in\Frob(\al_1,\dots,\al_n)$ if and only if 
    $$w+C_\Qbb(\al_1,\dots,\al_n)\cap\Ofrak_K\subseteq v+C_\Qbb(\al_1,\dots,\al_n)\cap\Ofrak_K.$$
Note that if $w\preccurlyeq v$ then $w=v+\ga$ for some $\ga\in C_\Qbb(\al_1,\dots,\al_n)\cap\Ofrak_K$, so the fact that every element of $C_\Qbb(\al_1,\dots,\al_n)\cap\Ofrak_K$ is non-negative implies that $w\geqslant v$, where $\geqslant$ is the standard total order on $\Rbb$. 
The relation $\preccurlyeq$ is clearly reflexive and transitive, and the previous observation implies that $\preccurlyeq$ is also antisymmetric. This relation is also compatible with addition in $\Frob(\al_1,\dots,\al_n)$, in the sense that if $u,v,w\in\Frob(\al_1,\dots,\al_n)$ and $u\preccurlyeq v$, then $u+w\preccurlyeq v+w$. Furthermore, note that if $v\in\Frob(\al_1,\dots,\al_n)$ and $w\in v+C_\Qbb(\al_1,\dots,\al_n)\cap\Ofrak_K$, then $w=v+\ga$ for some $\ga\in C_\Qbb(\al_1,\dots,\al_n)\cap\Ofrak_K$, and thus 
\begin{align*}
    w+C_\Qbb(\al_1,\dots,\al_n)\cap\Ofrak_K&=v+\ga+C_\Qbb(\al_1,\dots,\al_n)\cap\Ofrak_K\\
    &\subseteq v+C_\Qbb(\al_1,\dots,\al_n)\cap\Ofrak_K\subseteq\SG(\al_1,\dots,\al_n),
\end{align*}
which shows that both $w\in\Frob(\al_1,\dots,\al_n)$ and $w\preccurlyeq v$. Thence $\Frob(\al_1,\dots,\al_n)$ will not contain any minimal elements with respect to $\preccurlyeq$, since if $v\in\Frob(\al_1,\dots,\al_n)$ then we can choose any 
    $$w\in v+C_\Qbb(\al_1,\dots,\al_n)\cap\Ofrak_K\subseteq\Frob(\al_1,\dots,\al_n)$$
that is distinct from $v$, and then we will have $w\preccurlyeq v$ but $w\neq v$. However, we claim that $\Frob(\al_1,\dots,\al_n)$ has maximal elements with respect to the partial ordering $\preccurlyeq$, and furthermore, there are only finitely many such maximal elements. 

\begin{lemma}\label{max elements of Frob}
Let $K$ be a real number field and fix nonzero $\al_1,\dots,\al_n\in\Ofrak_K^+$ that generate $\Ofrak_K$ as a $\Zbb$-module. Then $\Frob(\al_1,\dots,\al_n)$ contains an element that is maximal with respect to $\preccurlyeq$. Furthermore, if $w\in\Frob(\al_1,\dots,\al_n)$, then $w\preccurlyeq\mu$ for some maximal element $\mu\in\Frob(\al_1,\dots,\al_n)$. 
\end{lemma}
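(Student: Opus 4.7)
The plan is to pass from the infinite poset $\Frob(\al_1,\dots,\al_n)$ to a finite subposet by cutting with the real total order, and then extract a maximal element. The key observation is that although $\preccurlyeq$ refines the real order (so $w\preccurlyeq v$ forces $w\geqslant v$ in $\Rbb$), the ``real-small'' portion of $\SG(\al_1,\dots,\al_n)$ is actually finite, because each $\al_i$ is a strictly positive real number.

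First I would fix an arbitrary $w\in\Frob(\al_1,\dots,\al_n)$ (this is nonempty by Theorem \ref{FrobTemplate}) and form
    \[T_w=\{v\in\Frob(\al_1,\dots,\al_n)\mid w\preccurlyeq v\}.\]
Since $w\preccurlyeq w$, the set $T_w$ contains $w$ and is thus nonempty. Next, I would observe that $T_w$ is finite. Any $v\in T_w$ satisfies $0\leqslant v\leqslant w$ in $\Rbb$ (using the remark in the paragraph defining $\preccurlyeq$), and $v$ has a representation $v=\sum_{i=1}^n x_i\al_i$ with $x_i\in\Nbb$. Because each $\al_i>0$ in $\Rbb$, the inequality $\sum_i x_i\al_i\leqslant w$ forces $x_i\leqslant w/\al_i$ for every $i$, so only finitely many tuples $(x_1,\dots,x_n)\in\Nbb^n$ can arise. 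Hence $\SG(\al_1,\dots,\al_n)\cap[0,w]$ is a finite subset of $\Ofrak_K^+$, and $T_w$ is a subset of it.

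Now $T_w$ is a nonempty finite poset under $\preccurlyeq$, so it contains a $\preccurlyeq$-maximal element $\mu$. I would then upgrade this to maximality in all of $\Frob(\al_1,\dots,\al_n)$: if $\mu\preccurlyeq\nu$ for some $\nu\in\Frob(\al_1,\dots,\al_n)$, transitivity with $w\preccurlyeq\mu$ yields $w\preccurlyeq\nu$, so $\nu\in T_w$, and maximality of $\mu$ inside $T_w$ gives $\mu=\nu$. This shows simultaneously that $\Frob(\al_1,\dots,\al_n)$ has a maximal element (apply the argument to any chosen $w$) and that every $w\in\Frob(\al_1,\dots,\al_n)$ is dominated by such a maximal $\mu$, namely the one produced above, which satisfies $w\preccurlyeq\mu$ by construction.

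The only step requiring a small argument is the finiteness of $\SG(\al_1,\dots,\al_n)\cap[0,w]$; the rest is essentially formal manipulation of the poset. The ``main obstacle,'' such as it is, lies in recognizing that positivity of each $\al_i$ in $\Rbb$ (rather than any arithmetic property of $\Ofrak_K$) is what makes this cut finite, since $\Ofrak_K^+\cap[0,w]$ itself can be infinite when $K$ is not totally real or when the Minkowski image stretches in other coordinates. Restricting to $\SG(\al_1,\dots,\al_n)$ sidesteps that issue entirely.
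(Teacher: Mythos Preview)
Your proof is correct and follows the same core idea as the paper: both arguments hinge on the finiteness of the set $F_w=\{v\in\Frob(\al_1,\dots,\al_n)\mid w\preccurlyeq v\}$, which in turn comes from the fact that $\SG(\al_1,\dots,\al_n)\cap[0,w]$ is finite because each $\al_i>0$ in $\Rbb$. Your version is somewhat more streamlined than the paper's. The paper first proves finiteness of $\{v\in S\mid v\leqslant w\}$ by a contradiction argument (if infinitely many, the coefficients must be unbounded), then invokes Zorn's lemma on all of $F$ to get existence of a maximal element, and finally applies Zorn's lemma again to the finite set $F_w$ to show every $w$ is dominated by a maximal element. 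You instead bound the coefficients directly via $x_i\leqslant w/\al_i$, skip straight to the finite poset $T_w$, and use the trivial fact that a nonempty finite poset has a maximal element---handling both conclusions at once and avoiding Zorn's lemma entirely. The underlying mechanism is identical; your presentation is just cleaner.
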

\begin{proof}
For brevity, set $C=C_\Qbb(\al_1,\dots,\al_n)\cap\Ofrak_K, S=\SG(\al_1,\dots,\al_n),$ and $F=\Frob(\al_1,\dots,\al_n)$. We first claim that for any $w\in S$, there are finitely many $v\in S$ for which $w\geqslant v$. Suppose that this is not the case, so there are an infinite number of distinct elements $v_1,v_2,\ldots\in S$ for which $w\geqslant v_i$ for every $i=1,2,\dots$. Let $v_{ij}\in\Nbb$ be (not necessarily unique) natural numbers for which
    $$v_i=\sum_{j=1}^{n}v_{ij}\al_j.$$
If there is some integer $N>0$ such that $v_{ij}\leqslant N$ for all $i=1,2,\dots$ and $j=1,\dots,n$, then there could be at most $(N+1)^n$ choices for the $v_i$, since the coefficient of each $\al_j$ in some representation of $v_i$ as a sum of the $\al_j$ must be one of the natural numbers $0,1,\dots,N$. Hence the coefficients $v_{ij}$ must grow without bound as $i\to\infty$ and $j$ ranges from $1$ to $n$. The fact that all of the $\al_j$ are positive then shows that at least one $v_i$ must eventually be bigger than $w$, contradicting the fact that $w\geqslant v_i$ for all $i$. Hence there are finitely many $v\in S$ for which $w\geqslant v$. If $w,v\in F\subseteq S$ and $w\preccurlyeq v$, then $w\geqslant v$, so it also follows that for any $w\in F$, there are only finitely many $v\in F$ for which $w\preccurlyeq v$.\\
\\
We now apply Zorn's lemma to show that $F$ has a maximal element with respect to $\preccurlyeq$. If $\Si\subseteq F$ is a finite chain in $F$, then $\Si$ clearly has an upper bound in $F$ with respect to $\preccurlyeq$, so suppose that $\Si\subseteq F$ is an infinite chain in $F$. Furthermore, assume that $\Si$ does not have an upper bound in $F$. Then for any $w\in\Si$, we can find some $v_1\in\Si\setminus\{w\}$ for which $w\preccurlyeq v_1$; else $v_1\preccurlyeq w$ for all $v_1\in\Si\setminus\{w\}$ because $\Si$ is totally ordered, and thus $w$ will be an upper bound of $\Si$. Similarly, we can find some $v_2\in\Si\setminus\{w,v_1\}$ for which $w\preccurlyeq v_1\preccurlyeq v_2$; else $v_2\preccurlyeq v_1$ for all $v_2\in\Si\setminus\{w,v_1\}$ because $\Si$ is totally ordered, and thus $v_1$ will be an upper bound of $\Si$. Continuing in this manner, we can find an infinite number of distinct elements $v_1,v_2,\ldots\in\Si\subseteq F$ for which $w\preccurlyeq v_1\preccurlyeq v_2\preccurlyeq\cdots$, which contradicts the fact that there are only finitely many $v\in F$ for which $w\preccurlyeq v$. Hence $\Si$ must have an upper bound, which means that we can apply Zorn's lemma to conclude that $F$ has a maximal element with respect $\preccurlyeq$. \\
\\
Now suppose that $w\in F$, let $F_w\subseteq F$ be the set
    $$F_w=\{v\in F \ | \ w\preccurlyeq v\},$$
and give $F_w$ the induced ordering from $\preccurlyeq$ on $F$. Note that by the observations in the first paragraph of this proof, $\#F_w<\infty$, and $F_w\neq\es$ because $w\in F_w$. Furthermore, if $\mu$ is a maximal element of $F_w$, then $\mu$ is a maximal element of $F$ because if $v\in F\setminus\{\mu\}$ and $\mu\preccurlyeq v$ then $w\preccurlyeq \mu\preccurlyeq v$, and thus $v\in F_w$, so $\mu\preccurlyeq v$ is not possible by the maximality of $\mu$. The fact that $\#F_w<\infty$ implies that any chain in $F_w$ is finite and thus has an upper bound, so Zorn's lemma implies that $F_w$ has a maximal element $\mu$. Then $\mu$ is also a maximal element of $F$, and $w\preccurlyeq \mu$.  
\end{proof}

\noindent
Now define $\Mfrak(\al_1,\dots,\al_n)\subseteq\Frob(\al_1,\dots,\al_n)$ to be the set of all maximal elements of $\Frob(\al_1,\dots,\al_n)$ with respect to $\preccurlyeq$, and note that $\Mfrak(\al_1,\dots,\al_n)\neq\es$ by Lemma \ref{max elements of Frob}. If we combine all the statements of Lemma \ref{max elements of Frob}, then we arrive at the following nice characterization of $\Frob(\al_1,\dots,\al_n)$.

\begin{lemma}\label{Frob as bad union}
Let $K$ be a real number field and $\al_1,\dots,\al_n\in\Ofrak_K^+$ be nonzero elements that generate $\Ofrak_K$ as a $\Zbb$-module. Then 
    $$\Frob(\al_1,\dots,\al_n)=\bigcup_{\mu\in\Mfrak(\al_1,\dots,\al_n)}(\mu+C_\Qbb(\al_1,\dots,\al_n)\cap\Ofrak_K).$$
\end{lemma}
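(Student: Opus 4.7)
The plan is to prove the two inclusions separately, and both should follow almost immediately from the work already done, since the partial ordering $\preccurlyeq$ was defined in terms of the cosets $v + C_\Qbb(\al_1,\dots,\al_n)\cap\Ofrak_K$.

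For the inclusion $\supseteq$, I would take any maximal element $\mu\in\Mfrak(\al_1,\dots,\al_n)$ and any $w\in\mu+C_\Qbb(\al_1,\dots,\al_n)\cap\Ofrak_K$. I would then invoke the observation already recorded in the paragraph preceding Lemma \ref{max elements of Frob}: since $\mu\in\Frob(\al_1,\dots,\al_n)$, the coset $\mu+C_\Qbb(\al_1,\dots,\al_n)\cap\Ofrak_K$ is entirely contained in $\Frob(\al_1,\dots,\al_n)$. Hence $w\in\Frob(\al_1,\dots,\al_n)$, and the inclusion follows by taking the union over all $\mu\in\Mfrak(\al_1,\dots,\al_n)$.

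For the inclusion $\subseteq$, I would fix $w\in\Frob(\al_1,\dots,\al_n)$ and apply the second statement of Lemma \ref{max elements of Frob} to produce a maximal element $\mu\in\Mfrak(\al_1,\dots,\al_n)$ with $w\preccurlyeq \mu$. By the definition of $\preccurlyeq$, this means
$$w+C_\Qbb(\al_1,\dots,\al_n)\cap\Ofrak_K\subseteq\mu+C_\Qbb(\al_1,\dots,\al_n)\cap\Ofrak_K.$$
Since $0\in C_\Qbb(\al_1,\dots,\al_n)\cap\Ofrak_K$ (taking all coefficients to be zero), the left-hand side contains $w$ itself, so $w\in\mu+C_\Qbb(\al_1,\dots,\al_n)\cap\Ofrak_K$, placing $w$ inside the right-hand union.

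There is essentially no serious obstacle here: the statement is a direct repackaging of Lemma \ref{max elements of Frob} combined with the definition of $\preccurlyeq$. The only mildly delicate point is remembering that the relation $\preccurlyeq$ is defined via containment of cosets, so that $w\preccurlyeq\mu$ translates immediately into $w\in\mu+C_\Qbb(\al_1,\dots,\al_n)\cap\Ofrak_K$ without any further argument.
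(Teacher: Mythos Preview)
Your proposal is correct and follows essentially the same route as the paper's proof: both handle $\supseteq$ by noting that $\mu+C_\Qbb(\al_1,\dots,\al_n)\cap\Ofrak_K\subseteq\Frob(\al_1,\dots,\al_n)$ whenever $\mu\in\Frob(\al_1,\dots,\al_n)$, and both handle $\subseteq$ by invoking Lemma~\ref{max elements of Frob} to produce $\mu$ with $w\preccurlyeq\mu$ and then reading off $w\in\mu+C_\Qbb(\al_1,\dots,\al_n)\cap\Ofrak_K$. Your explicit mention that $0\in C_\Qbb(\al_1,\dots,\al_n)\cap\Ofrak_K$ is the one small detail the paper leaves implicit.
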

\begin{proof}
Let 
    $$C=C_\Qbb(\al_1,\dots,\al_n)\cap\Ofrak_K,\quad S=\SG(\al_1,\dots,\al_n),\quad F=\Frob(\al_1,\dots,\al_n),\quad\Mfrak=\Mfrak(\al_1,\dots,\al_n).$$
The inclusion
    $$F\supseteq\bigcup_{\mu\in\Mfrak}(\mu+C)$$
is obvious, since if $\mu\in F$ then $\mu+C\subseteq F$. Now, note that Lemma \ref{max elements of Frob} shows that if $w\in F$ then there is some $\mu\in\Mfrak$ for which $w\preccurlyeq \mu$, and thus $w\in \mu+C$, so we immediately get the inclusion in the other direction.
\end{proof}

\noindent
The above lemma provides the first main ingredient in the proof of Theorem \ref{FinitelyManyMaximalElements}. If $n\geqslant 1$, recall that the pointwise partial order $\leqslant_{\mathrm{p}}$ on $\Zbb^n$ is defined by $(x_1,\dots,x_n)\leqslant_\mathrm{p} (y_1,\dots,y_n)$ if and only if $x_i\leqslant y_i$ for all $i=1,\dots,n$. Then Dickson's lemma says that the following is true:

\begin{lemma}[Dickson's Lemma]
Let $n\geqslant 1$. Then any nonempty set $S\subseteq\Nbb^n$ has a finite and nonzero number of minimal elements with respect to $\leqslant_\mathrm{p}$. 
\end{lemma}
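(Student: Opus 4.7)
The plan is to handle the two assertions of the statement — existence of at least one minimal element, and finiteness of the set of minimal elements — separately. For existence, I would first observe that $\leqslant_{\mathrm{p}}$ is well-founded on $\Nbb^n$: any strictly descending chain $s^{(1)}>_{\mathrm{p}}s^{(2)}>_{\mathrm{p}}\cdots$ would induce a strictly decreasing sequence of natural numbers via the coordinate-sum map $s\mapsto s_1+\cdots+s_n$, which is impossible. Picking any $s\in S$ and iteratively replacing it with some $s'\in S$ satisfying $s'<_{\mathrm{p}}s$ (whenever such an $s'$ exists) must therefore terminate in finitely many steps at an element that is minimal in $S$ with respect to $\leqslant_{\mathrm{p}}$.

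For finiteness, I would prove the equivalent statement that $(\Nbb^n,\leqslant_{\mathrm{p}})$ contains no infinite antichain; this suffices because the set of minimal elements of $S$ is, by definition, an antichain. In fact I would establish the stronger claim that every infinite sequence $(s^{(k)})_{k\geqslant1}$ in $\Nbb^n$ admits an infinite subsequence that is weakly increasing with respect to $\leqslant_{\mathrm{p}}$. This rules out infinite antichains, since any two entries of such a weakly increasing subsequence are comparable.

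The stronger claim is proved by induction on $n$. The base case $n=1$ is the classical fact that every infinite sequence of natural numbers contains an infinite non-decreasing subsequence, which follows by case-splitting on whether the sequence is bounded (so some value is attained infinitely often, by pigeonhole, giving a constant subsequence) or unbounded (so an infinite strictly increasing subsequence can be extracted). For the inductive step $n\to n+1$, write each $s^{(k)}\in\Nbb^{n+1}$ as $(t^{(k)},u^{(k)})$ with $t^{(k)}\in\Nbb^n$ and $u^{(k)}\in\Nbb$. Applying the inductive hypothesis to $(t^{(k)})_k$ yields an infinite subsequence on which the first $n$ coordinates are weakly $\leqslant_{\mathrm{p}}$-increasing, and then applying the base case to the corresponding $u$-values along that subsequence yields a further infinite subsequence on which the last coordinate is also weakly increasing. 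The resulting subsequence is weakly $\leqslant_{\mathrm{p}}$-increasing in all $n+1$ coordinates, as required.

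There is no serious obstacle here — Dickson's Lemma is classical and this is its standard proof. The main things to be careful about are organizing the iterated subsequence extraction cleanly in the induction, and making the equivalence between finiteness of the minimal-element antichain and the absence of infinite antichains in $\Nbb^n$ explicit so that the two halves of the argument fit together.
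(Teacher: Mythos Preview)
Your argument is correct, but the paper takes a genuinely different route. Rather than the combinatorial well-quasi-order argument you outline (well-foundedness via coordinate sums for existence, then an inductive extraction of a $\leqslant_{\mathrm{p}}$-increasing subsequence to rule out infinite antichains), the paper gives an algebraic proof via Hilbert's Basis Theorem: it passes to the monomial ideal $\afrak=\langle x^s \mid s\in S\rangle$ in a polynomial ring $k[x_1,\dots,x_n]$, uses Noetherianity to extract a finite generating set $x^{s_1},\dots,x^{s_m}$, and then observes that membership of any $x^s$ in this ideal forces $s_i\leqslant_{\mathrm{p}} s$ for some $i$, so the minimal elements of $S$ lie among $s_1,\dots,s_m$. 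Your approach is more elementary and entirely self-contained---it does not import the Noetherianity of polynomial rings---and it actually proves the stronger statement that $(\Nbb^n,\leqslant_{\mathrm{p}})$ is a well-quasi-order. The paper's approach is shorter on the page and ties Dickson's Lemma directly to a familiar structural fact from commutative algebra, at the cost of invoking a nontrivial black box. Either proof is perfectly adequate for the use made of the lemma later in the paper.
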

\begin{proof}
Let $k$ be a field and consider the polynomial ring $k[x_1,\dots,x_n]$ and the ideal $\afrak=\langle x^s \mid s\in S\rangle\subseteq k[x_1,\dots,x_n]$, where we are using the notation $x^\al=x_1^{\al_1}\cdots x_n^{\al_n}$ when $\al=(\al_1,\dots,\al_n)\in\Nbb^n$. If $S$ is finite then the claim of the lemma  is obvious, so suppose that $S$ is infinite. Then because $\Nbb^n$ is countable, we can write $S=\{s_1,s_2,\dots\}$. By Hilbert's Basis Theorem, $k[x_1,\dots,x_n]$ is a Noetherian ring, so there is some smallest nonzero $m\in\Nbb$ for which 
    $$\langle x^{s_1},\dots,x^{s_m}\rangle=\langle x^{s_1},\dots,x^{s_{m+1}}\rangle=\cdots,$$ 
and thus $\afrak=\langle x^{s_1},\dots,x^{s_m}\rangle$. Then if $s\in S$ is not one of the $s_1,\dots,s_m$, we have $x^s\in \afrak$, so there are $f_1,\dots,f_m\in k[x_1,\dots,x_n]$ for which 
    $$x^s=\sum_{i=1}^{m}f_ix^{s_i}.$$
But this means that at least one monomial term in one of the $f_i$ must be some nonzero element of $k$ times $x^{s-s_i}$, so $0\leqslant_\mathrm{p}s-s_i$, and thus $s_i\leqslant_\mathrm{p}s$. Hence $s$ cannot be a minimal element of $S$, so $s_1,\dots,s_m$ are all the possible minimal elements of $S$. It follows that $S$ can have only finitely many minimal elements, and choosing the minimal elements (with respect to $\leqslant_\mathrm{p}$) out of the $s_1,\dots,s_m$ shows that $S$ has at least one minimal element. 
\end{proof}

\noindent
Using Dickson's lemma, we can show that the set $\Mfrak(\al_1,\dots,\al_n)$ will always have finitely many elements, meaning $\Frob(\al_1,\dots,\al_n)$ will always have finitely many maximal elements with respect to $\preccurlyeq$. This allows us to prove Theorem \ref{FinitelyManyMaximalElements}.

\FinitelyManyMaximalElements*
\begin{proof}
As before, set 
    $$C=C_\Qbb(\al_1,\dots,\al_n)\cap\Ofrak_K,\quad S=\SG(\al_1,\dots,\al_n),\quad F=\Frob(\al_1,\dots,\al_n),\quad\Mfrak=\Mfrak(\al_1,\dots,\al_n).$$
Because $\al_1,\dots,\al_n$ span $\Ofrak_K$ by assumption, we have a surjective $\Zbb$-module homomorphism $f:\Zbb^n\to\Ofrak_K$ given by  
    $$f(x_1,\dots,x_n)=\sum_{i=1}^{n}x_i\al_i.$$
Define a set $W=f^{-1}(F)\cap\Nbb^n\subseteq\Nbb^n$, and note that for any $w\in F$, there is some $w^*\in W$ for which $f(w^*)=w$, since every $w\in F$ must also be in $S=f(\Nbb^n)$. Theorem \ref{FrobTemplate} then shows that $W$ is nonempty because $F$ is. We now claim that if $\mu\in\Mfrak$ and $\mu^*\in f^{-1}(\mu)\cap\Nbb^n$, then $\mu^*$ is a minimal element of $W$ with respect to the pointwise partial ordering $\leqslant_\mathrm{p}$. Suppose that $w^*\in W$ is such that $f(w^*)=w\in F$ and $w^*\leqslant_\mathrm{p} \mu^*$. Let $w^*=(w_1,\dots,w_n)$ and $\mu^*=(\mu_1,\dots,\mu_n)$, so the fact that $w^*\leqslant_\mathrm{p} \mu^*$ implies that each $w_i\leqslant \mu_i$, and thus there are $y_1,\dots,y_n\in\Nbb$ for which $\mu_i=w_i+y_i$. Hence 
    $$w+\sum_{i=1}^{n}y_i\al_i=f((w_1,\dots,w_n)+(y_1,\dots,y_n))=f(\mu_1,\dots,\mu_n)=\mu,$$
so 
    $$\mu+C=w+\sum_{i=1}^{n}y_i\al_i+C\subseteq w+C,$$
where we used the fact that each $y_i\in\Nbb$, so each $y_i\al_i\in S\subseteq C$. But $w\in F$, so the maximality of $\mu$ implies that $\mu=w$, and thus each $y_i=0$ because each $\al_i>0$, so the only way for $\sum\limits_{i=1}^{n}y_i\al_i=0$ when $y_1,\dots,y_n\in\Nbb$ is for all of $y_1,\dots,y_n$ to be zero. Then $\mu^*=w^*$, which means that $\mu^*$ must be a minimal element of $W$ with respect to the pointwise partial ordering $\leqslant_\mathrm{p}$. By Dickson's lemma, there are finitely many minimal elements of $W\subseteq\Nbb^n$, so it follows that there must also be finitely many maximal elements of $F$. Hence $\Mfrak$ is a finite set, and Lemma \ref{Frob as bad union} shows that $F$ may be written as the finite union 
    \[F=\bigcup_{\mu\in\Mfrak}(\mu+C).\qedhere\]
\end{proof}

\noindent
The union in Theorem \ref{FinitelyManyMaximalElements} is interesting since it is the smallest possible way of writing $\Frob(\al_1,\dots,\al_n)$ as a union of translates of the set $C_\Qbb(\al_1,\dots,\al_n)\cap\Ofrak_K$. This is because if $W$ is some collection of elements of $\Frob(\al_1,\dots,\al_n)$ for which 
\begin{equation}
    \Frob(\al_1,\dots,\al_n)=\bigcup_{w\in W}(w+C_\Qbb(\al_1,\dots,\al_n)\cap\Ofrak_K), \label{Frob as union of elements of W}
\end{equation}
then each $\mu\in\Mfrak$ must be contained in some $w+C_\Qbb(\al_1,\dots,\al)\cap\Ofrak_K$, so $\mu=w$ for some $w\in W$ and thus $\Mfrak\subseteq W$. It turns out that if we add a few more conditions to the set $W$, then we can get a valuable characterization of when a given set of elements in $\Frob(\al_1,\dots,\al_n)$ will be the set of maximal elements. Even more so, this characterization allows us to avoid having to actually check the maximality of certain elements in many explicit calculations of the Frobenius semigroup.

\begin{lemma}\label{condition for maximality}
Let $K$ be a real number field and $\al_1,\dots,\al_n\in\Ofrak_K^+$ be distinct nonzero elements that generate $\Ofrak_K$ as a $\Zbb$-module. Suppose that $\mu_1,\dots,\mu_m\in\Frob(\al_1,\dots,\al_n)$ satisfy
\begin{enumerate}[label=(\arabic*)]
    \item\label{maxcond1} for all $w\in\Frob(\al_1,\dots,\al_n)$, there is some $i=1,\dots,m$ for which $w\preccurlyeq\mu_i$;
    \item\label{maxcond2} for all distinct $i,j\in\{1,\dots,m\}$, $\mu_i\gprecneq\mu_j$. 
\end{enumerate}
Then $\Mfrak(\al_1,\dots,\al_n)=\{\mu_1,\dots,\mu_m\}$.
\end{lemma}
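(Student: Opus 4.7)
The plan is to prove the two containments $\{\mu_1,\dots,\mu_m\}\subseteq\Mfrak(\al_1,\dots,\al_n)$ and $\Mfrak(\al_1,\dots,\al_n)\subseteq\{\mu_1,\dots,\mu_m\}$ using the two listed hypotheses, together with the antisymmetry of $\preccurlyeq$ (which was established in the discussion preceding Lemma \ref{max elements of Frob} via the fact that $w\preccurlyeq v$ forces $w\geqslant v$ in $\Rbb$). Condition \ref{maxcond2} should be read as the antichain condition: for distinct $i,j$, $\mu_i\not\preccurlyeq\mu_j$. This is the only way to make the hypothesis internally consistent, since $\mu_i\prec\mu_j$ and $\mu_j\prec\mu_i$ cannot both hold.

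First I would show $\{\mu_1,\dots,\mu_m\}\subseteq\Mfrak(\al_1,\dots,\al_n)$. Fix an index $i$, and suppose $v\in\Frob(\al_1,\dots,\al_n)$ satisfies $\mu_i\preccurlyeq v$. Applying condition \ref{maxcond1} to $v$ produces an index $j$ with $v\preccurlyeq\mu_j$, and transitivity of $\preccurlyeq$ then gives $\mu_i\preccurlyeq\mu_j$. Condition \ref{maxcond2} immediately forces $i=j$, which yields $\mu_i\preccurlyeq v\preccurlyeq\mu_i$. Antisymmetry of $\preccurlyeq$ then gives $v=\mu_i$, so $\mu_i$ is maximal.

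The reverse inclusion is a one-line consequence of condition \ref{maxcond1}: if $\mu\in\Mfrak(\al_1,\dots,\al_n)$, then by hypothesis \ref{maxcond1} there is some $i$ with $\mu\preccurlyeq\mu_i$, and maximality of $\mu$ together with antisymmetry forces $\mu=\mu_i$.

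There is no real obstacle here; the argument is purely order-theoretic and amounts to recognizing that the hypotheses say exactly that the $\mu_i$ form an antichain that is cofinal (upward-dominant) in $\Frob(\al_1,\dots,\al_n)$, from which the identification with the set of maximal elements is automatic. The only care required is the correct interpretation of the $\gprecneq$ symbol in condition \ref{maxcond2}, and invoking antisymmetry of $\preccurlyeq$ (already recorded in the paper) to close off both directions.
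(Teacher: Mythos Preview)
Your proof is correct and follows essentially the same two-inclusion strategy as the paper. The only noteworthy difference is in the direction $\{\mu_1,\dots,\mu_m\}\subseteq\Mfrak(\al_1,\dots,\al_n)$: the paper invokes Lemma~\ref{max elements of Frob} to produce a maximal element $\mu$ above $\mu_i$ and then argues $\mu=\mu_i$, whereas you verify maximality of $\mu_i$ directly from the definition by showing $\mu_i\preccurlyeq v$ forces $v=\mu_i$. Your route is marginally cleaner in that it does not depend on the prior existence result; otherwise the arguments coincide. Your reading of condition~\ref{maxcond2} as the antichain condition $\mu_i\not\preccurlyeq\mu_j$ is exactly how the paper uses it.
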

\begin{proof}
As before, let
    $$C=C_\Qbb(\al_1,\dots,\al_n)\cap\Ofrak_K,\quad S=\SG(\al_1,\dots,\al_n),\quad F=\Frob(\al_1,\dots,\al_n),\quad\Mfrak=\Mfrak(\al_1,\dots,\al_n).$$
Then condition \ref{maxcond1} shows that if $\mu\in\Mfrak$ then $\mu\preccurlyeq\mu_i$ for some $i$. Hence $\mu=\mu_i$ by the maximality of $\mu$, so $\Mfrak\subseteq\{\mu_1,\dots,\mu_m\}$. Now fix $i=1,\dots,m$. Then Lemma \ref{max elements of Frob} shows that $\mu_i\preccurlyeq\mu$ for some maximal $\mu\in\Mfrak$, and condition \ref{maxcond1} shows that $\mu\preccurlyeq\mu_j$ for some $j=1,\dots,m$, and thus $\mu=\mu_j$. Hence $\mu_i\preccurlyeq\mu=\mu_j$, so condition \ref{maxcond2} shows that $j=i$ and $\mu=\mu_i$. Hence $\{\mu_1,\dots,\mu_m\}\subseteq\Mfrak$, so $\Mfrak=\{\mu_1,\dots,\mu_m\}.$
\end{proof}

\noindent
An argument identical to the one given in Lemma \ref{condition for maximality} also shows that if $W\subseteq\Frob(\al_1,\dots,\al_n)$ is a set satisfying equation (\ref{Frob as union of elements of W}), and no two elements of $W$ precede one another, then $W$ must be the set of maximal elements $\Mfrak(\al_1,\dots,\al_n)$. In particular, this means that $W$ must be finite. It is thus not possible to write $\Frob(\al_1,\dots,\al_n)$ as an infinite union of translates of the set $C_\Qbb(\al_1,\dots,\al_n)\cap\Ofrak_K$, where no translate is a subset of another one. It is also interesting to consider how the distinct maximal elements of $\Frob(\al_1,\dots,\al_n)$ are related to each other. The next lemma begins to answer this question. 

\begin{lemma}\label{maximal elements are pairwise linearly independent}
Let $K$ be a real number field and $\al_1,\dots,\al_n\in\Ofrak_K^+$ generate $\Ofrak_K$ as a $\Zbb$-module. Then any two distinct elements of $\Mfrak(\al_1,\dots,\al_n)$ are $\Zbb$-linearly independent. 
\end{lemma}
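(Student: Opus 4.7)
The plan is to argue by contradiction. Suppose $\mu_1, \mu_2 \in \Mfrak(\al_1,\dots,\al_n)$ are distinct yet $\Zbb$-linearly dependent, so there exist integers $a,b$, not both zero, with $a\mu_1 + b\mu_2 = 0$. Since $\mu_1,\mu_2 \in \Ofrak_K^+$ are nonzero, they are strictly positive real numbers, so $a$ and $b$ must both be nonzero and have opposite signs. Solving, one obtains $\mu_2 = q\mu_1$ for a positive rational $q$, and $q \neq 1$ because $\mu_1 \neq \mu_2$. After possibly swapping the roles of $\mu_1$ and $\mu_2$, I may assume $q > 1$.

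The next step is to show that $\mu_2 - \mu_1 \in C_\Qbb(\al_1,\dots,\al_n) \cap \Ofrak_K$. Since $\mu_1 \in \Frob(\al_1,\dots,\al_n) \subseteq \SG(\al_1,\dots,\al_n)$, I can write $\mu_1 = \sum_{i=1}^{n} x_i \al_i$ with $x_i \in \Nbb$. Then
    $$\mu_2 - \mu_1 = (q-1)\mu_1 = \sum_{i=1}^{n}(q-1)x_i\al_i,$$
and since $q-1 > 0$ is rational, each coefficient $(q-1)x_i$ lies in $\Qbb_{\geqslant 0}$; hence $\mu_2 - \mu_1 \in C_\Qbb(\al_1,\dots,\al_n)$. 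Membership in $\Ofrak_K$ is automatic because $\mu_1, \mu_2 \in \Ofrak_K$, giving the required containment.

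Finally, I would invoke the characterization (immediate from the definition, using that $0 \in C_\Qbb(\al_1,\dots,\al_n) \cap \Ofrak_K$) that $w \preccurlyeq v$ if and only if $w - v \in C_\Qbb(\al_1,\dots,\al_n) \cap \Ofrak_K$. The previous step therefore yields $\mu_2 \preccurlyeq \mu_1$; combined with $\mu_1 \neq \mu_2$, this contradicts the maximality of $\mu_2$ in $\Frob(\al_1,\dots,\al_n)$ and completes the argument.

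The argument is quite short and direct, so there is no dominant obstacle. The only two places requiring care are interpreting $\Zbb$-linear dependence of two strictly positive real numbers as a positive rational ratio (handled by sign considerations), and verifying that the scalar $(q-1)\mu_1$ belongs both to the positive rational cone and to $\Ofrak_K$—the latter coming for free from the fact that it equals $\mu_2 - \mu_1$.
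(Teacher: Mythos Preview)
Your proposal is correct and follows essentially the same route as the paper's proof: both argue by contradiction, deduce that one maximal element is a positive rational multiple of the other, show the difference lies in $C_\Qbb(\al_1,\dots,\al_n)\cap\Ofrak_K$, and then use this to contradict maximality via the partial order. The only cosmetic differences are that you normalize to $q>1$ (the paper to $x_1/x_2<1$) and you state the equivalence $w\preccurlyeq v \iff w-v\in C_\Qbb(\al_1,\dots,\al_n)\cap\Ofrak_K$ explicitly, whereas the paper verifies the containment of translated cones directly.
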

\begin{proof}
Let 
    $$C=C_\Qbb(\al_1,\dots,\al_n)\cap\Ofrak_K,\quad S=\SG(\al_1,\dots,\al_n),\quad F=\Frob(\al_1,\dots,\al_n),\quad\Mfrak=\Mfrak(\al_1,\dots,\al_n),$$
and suppose that this is not true. Then there are distinct $\mu_1,\mu_2\in\Mfrak$ and nonzero $y_1,y_2\in\Zbb$ for which 
    $$y_1\mu_1+y_2\mu_2=0.$$
Because $\mu_1,\mu_2>0$, precisely one of the $y_i$ must be less than zero, so we may assume that $x_1,x_2\in\Nbb$ are nonzero, $x_1<x_2$ (if $x_1=x_2$ then it trivially follows that $\mu_1=\mu_2$), and that we have an equation in the form
    $$x_1\mu_1-x_2\mu_2=0,$$
so $\mu_2=(x_1/x_2)\mu_1$. Then $x_1/x_2<1$, so $1-x_1/x_2\in\Qbb_{\geqslant0}$, and thus $\mu_1-\mu_2=(1-x_1/x_2)\mu_1\in C_\Qbb(\al_1,\dots,\al_n)$ because $\mu_1\in S$ by definition. But $\mu_1-\mu_2\in\Ofrak_K$, so it follows that $\mu_1-\mu_2\in C$, and thus 
    $$\mu_1+C=\mu_2+(\mu_1-\mu_2)+C\subseteq \mu_2+C.$$
The maximality of $\mu_1$ then implies that $\mu_1=\mu_2$, which contradicts the original assumption that $\mu_1,\mu_2\in\Mfrak$ were distinct. 
\end{proof}

\noindent
While it may be tempting to try to generalize this to show that any number of maximal elements of $\Frob(\al_1,\dots,\al_n)$ are linearly independent, results in section \ref{The Number of Maximal Elements Section} show that this fails.

\section{An Example of a Simple Frobenius Semigroup For Quadratic Extensions}\label{An Example of a Simple Frobenius Set For Quadratic Extensions Section}

We now offer an explicit calculation of the Frobenius semigroup for certain collections of three elements in real quadratic number fields. Before we do this, we prove some basic properties about the $C_\Qbb\cap\Ofrak_K$ and $\SG$ sets in this case.

\begin{lemma}\label{C_Q in real quadratic}
Let $K$ be a real quadratic number field with positive integral basis $\be_1,\be_2\in\Ofrak_K^+$, and suppose that $\al=a_2\be_2-a_1\be_1\in\Ofrak_K^+$, where $a_1,a_2\in\Nbb$ are nonzero. Then 
\begin{equation}
    C_\Qbb(\be_1,\be_2,\al)\cap\Ofrak_K=\left\{y_1\be_1+y_2\be_2 \left|\, y_1,y_2\in\Zbb, \ y_2\geqslant 0 \,\text{ and } 
    y_2\geqslant -\frac{a_2}{a_1}y_1\right.\right\}\label{C_Q(b1,b2,a)}.
\end{equation}
\end{lemma}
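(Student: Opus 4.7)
My plan is to expand a generic element of $C_\Qbb(\be_1,\be_2,\al)$ in the $\Zbb$-basis $\be_1,\be_2$ and then use $\Qbb$-linear independence together with the intersection $\cap\,\Ofrak_K$ to translate ``rational nonnegative combination that happens to be an algebraic integer'' into an explicit system of linear conditions on integer coordinates.

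Concretely, any element of $C_\Qbb(\be_1,\be_2,\al)$ has the form
\[
x_1\be_1+x_2\be_2+x_3\al=(x_1-a_1x_3)\be_1+(x_2+a_2x_3)\be_2,
\]
with $x_1,x_2,x_3\in\Qbb_{\geqslant0}$. Since $\be_1,\be_2$ are a $\Zbb$-basis of $\Ofrak_K$ (and in particular are $\Qbb$-linearly independent, as noted in the proof of Theorem \ref{FrobTemplate}), such an element lies in $\Ofrak_K$ iff $y_1:=x_1-a_1x_3\in\Zbb$ and $y_2:=x_2+a_2x_3\in\Zbb$. So the lemma reduces to characterizing the pairs $(y_1,y_2)\in\Zbb^2$ arising this way.

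For the forward inclusion (the set on the left sits inside the set on the right), I would simply observe from $x_2,x_3\geqslant0$ that $y_2=x_2+a_2x_3\geqslant0$, and
\[
y_2+\tfrac{a_2}{a_1}y_1=x_2+a_2x_3+\tfrac{a_2}{a_1}(x_1-a_1x_3)=x_2+\tfrac{a_2}{a_1}x_1\geqslant0,
\]
which is exactly the stated inequality $y_2\geqslant -\tfrac{a_2}{a_1}y_1$.

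For the reverse inclusion, given $y_1,y_2\in\Zbb$ with $y_2\geqslant0$ and $y_2\geqslant -\tfrac{a_2}{a_1}y_1$, I would split on the sign of $y_1$. If $y_1\geqslant0$, take $x_3=0$, $x_1=y_1$, $x_2=y_2$, all of which are nonnegative rationals. If $y_1<0$, take $x_3=-y_1/a_1\in\Qbb_{>0}$ and $x_1=0$; then
\[
x_2=y_2-a_2x_3=y_2+\tfrac{a_2}{a_1}y_1\geqslant 0
\]
by hypothesis, and again all $x_i\in\Qbb_{\geqslant0}$. In either case $y_1\be_1+y_2\be_2=x_1\be_1+x_2\be_2+x_3\al\in C_\Qbb(\be_1,\be_2,\al)$, completing the proof. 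The only subtlety is the case split in this last step, which is where the two separate inequalities $y_2\geqslant 0$ and $y_2\geqslant -\tfrac{a_2}{a_1}y_1$ each become binding, but there is no real obstacle — the argument is a direct substitution.
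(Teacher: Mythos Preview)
Your proof is correct and follows essentially the same approach as the paper: expand in the basis $\be_1,\be_2$, use $\Qbb$-linear independence to reduce to integer conditions on the coordinates, and then verify the two inclusions directly. The only cosmetic difference is in the reverse inclusion, where the paper avoids your case split on the sign of $y_1$ by writing $y_1\be_1+y_2\be_2=\dfrac{y_2}{a_2}\,\al+\Bigl(\dfrac{y_2a_1}{a_2}+y_1\Bigr)\be_1$ uniformly, using only $\al$ and $\be_1$; both arguments are equally elementary.
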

\begin{proof}
Let $C$ denote the set on the right hand side of equation (\ref{C_Q(b1,b2,a)}), and first suppose that $y_1\be_1+y_2\be_2\in C$, with $y_1\in\Zbb, y_2\in\Nbb$, and $y_2\geqslant -(a_2/a_1)y_1$. Then 
    $$\frac{y_2}{a_2}\al=y_2\be_2-\frac{y_2a_1}{a_2}\be_1.$$
By definition, we know that
    $$\frac{y_2a_1}{a_2}+y_1=\frac{y_2a_1+y_1a_2}{a_2}\geqslant0,$$
so it follows that 
    $$y_2\be_2+y_1\be_1=\frac{y_2}{a_2}\al+\left(\frac{y_2a_1}{a_2}+y_1\right)\!\be_1\in C_\Qbb(\be_1,\be_2,\al)\cap\Ofrak_K,$$
and thus $C\subseteq C_\Qbb(\be_1,\be_2,\al)\cap\Ofrak_K$. Now suppose that $z,z_1,z_2\in\Qbb_{\geqslant0}$ and $z\al+z_1\be_1+z_2\be_2\in\Ofrak_K$. Then 
    $$z\al+z_1\be_1+z_2\be_2=(z_1-za_1)\be_1+(z_2+za_2)\be_2,$$
so $z_1-za_1,z_2+za_2\in\Zbb$ because $\be_1,\be_2$ is a basis for $\Ofrak_K$. Furthermore,
    $$-\frac{a_2}{a_1}(z_1-za_1)=-\frac{a_2z_1}{a_1}+za_2\leqslant z_2+za_2,$$
since $a_1,a_2,z_1,z_2\geqslant0$. Hence $z\al+z_1\be_1+z_2\be_2\in C$, so it follows that $C_\Qbb(\be_1,\be_2,\al)\cap\Ofrak_K=C$. 
\end{proof}

\noindent
In this special case of a quadratic extension, the elements of $\SG(\be_1,\be_2,\al)$ will also satisfy certain nice properties.

\begin{lemma}\label{SG in real quadratic}
Let $K$ be a real quadratic number field with positive integral basis $\be_1,\be_2\in\Ofrak_K^+$, and suppose that $\al=a_2\be_2-a_1\be_1\in\Ofrak_K^+$, where $a_1,a_2\in\Nbb$ are nonzero. Furthermore, assume that $q,r\in\Nbb$ are such that $0\leqslant r\leqslant a_1$ and $y_1=a_1q+r$. If $y_2\in\Zbb$, then 
\begin{enumerate}[label=(\alph*)]
    \item\label{SG in real quadratic a} if $0<r<a_1$, we have $y_2\be_2-y_1\be_1\in\SG(\be_1,\be_2,\al)$ if and only if $y_2\geqslant(q+1)a_2$; and
    \item\label{SG in real quadratic b} if $r=0$, meaning $y_1=qa_1$, we have $y_2\be_2-y_1\be_1\in\SG(\be_1,\be_2,\al)$ if and only if $y_2\geqslant qa_2$.
\end{enumerate}
\end{lemma}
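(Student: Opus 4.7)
The plan is to reduce membership in $\SG(\be_1,\be_2,\al)$ to a one-variable integer feasibility condition and then apply the division algorithm. By definition, $y_2\be_2 - y_1\be_1 \in \SG(\be_1,\be_2,\al)$ if and only if there exist $x_1,x_2,x_3 \in \Nbb$ with
\[
x_1\be_1 + x_2\be_2 + x_3\al = -y_1\be_1 + y_2\be_2.
\]
Substituting $\al = a_2\be_2 - a_1\be_1$ and equating coefficients with respect to the $\Zbb$-basis $\be_1, \be_2$ of $\Ofrak_K$, I would obtain the two equations
\[
x_1 = x_3 a_1 - y_1, \qquad x_2 = y_2 - x_3 a_2.
\]

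Next, I would recast the nonnegativity constraints $x_1,x_2 \in \Nbb$ as the single condition that there exists $x_3 \in \Nbb$ with $y_1/a_1 \leqslant x_3 \leqslant y_2/a_2$. Since $a_2 > 0$, increasing $x_3$ only tightens the upper inequality while loosening the lower one, so such an $x_3$ exists if and only if the smallest integer satisfying $x_3 \geqslant y_1/a_1$ also satisfies $x_3 a_2 \leqslant y_2$. Writing $y_1 = qa_1 + r$ from the hypothesis, the case split on $r$ determines which integer this is.

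For case \ref{SG in real quadratic a}, when $0 < r < a_1$, the quotient $y_1/a_1 = q + r/a_1$ lies strictly between $q$ and $q+1$, so the minimal integer $x_3 \geqslant y_1/a_1$ is $q+1$, and the membership condition collapses to $y_2 \geqslant (q+1)a_2$. For case \ref{SG in real quadratic b}, when $r = 0$, we have $y_1/a_1 = q$ exactly, so the minimal integer $x_3 \geqslant y_1/a_1$ is $q$, and the condition is $y_2 \geqslant qa_2$. The reverse implication in each case is witnessed by the explicit triple $(x_1, x_2, x_3)$ built from the minimal $x_3$: namely $x_3 = q+1$ with $x_1 = a_1 - r$ and $x_2 = y_2 - (q+1)a_2$ in case \ref{SG in real quadratic a}, and $x_3 = q$ with $x_1 = 0$ and $x_2 = y_2 - qa_2$ in case \ref{SG in real quadratic b}.

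The main obstacle is essentially bookkeeping: there is no deep step beyond noticing that once one fixes the smallest feasible value of $x_3$, the remaining constraint on $y_2$ is forced. The only subtlety is ensuring the correct ceiling when $r = 0$ versus $r > 0$, which is precisely why the statement is phrased as two cases.
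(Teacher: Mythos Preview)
Your proposal is correct and follows essentially the same approach as the paper: both arguments write $y_2\be_2-y_1\be_1=z\al+z_1\be_1+z_2\be_2$, equate coefficients in the integral basis to obtain $z_1=za_1-y_1$ and $z_2=y_2-za_2$, and then use the division $y_1=qa_1+r$ to determine the least admissible $z\in\Nbb$ (namely $q+1$ when $0<r<a_1$ and $q$ when $r=0$), which in turn forces the stated bound on $y_2$. Your phrasing in terms of the ceiling $\lceil y_1/a_1\rceil$ is a tidy repackaging of exactly the same computation the paper carries out.
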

\begin{proof}
\ref{SG in real quadratic a}: First suppose that $y_2\geqslant (q+1)a_2$. Then 
    $$y_2\be_2-y_1\be_1=(q+1)\al+((q+1)a_1-y_1)\be_1+(y_2-(q+1)a_2)\be_2\in\SG(\be_1,\be_2,\al)$$
since both $(q+1)a_1-y_1=a_1-r\geqslant0$ and $y_2-(q+1)a_2\geqslant0$. Conversely, suppose that $y_2\be_2-y_1\be_1\in\SG(\be_1,\be_2,\al)$, so there are $z,z_1,z_2\in\Nbb$ for which 
    $$y_2\be_2-y_1\be_1=z\al+z_1\be_1+z_2\be_2=(z_1-za_1)\be_1+(z_2+za_2)\be_2.$$
Hence $za_1-z_1=y_1$ and $z_2+za_2=y_2$, so $za_1-z_1=a_1q+r$, and the fact that $r>0$ shows that
    $$za_1=a_1q+r+z_1>a_1q,$$
so $z\geqslant q+1$, and thus
    $$y_2=z_2+za_2\geqslant z_2+(q+1)a_2\geqslant (q+1)a_2.$$ 
\ref{SG in real quadratic b}: First suppose that $y_2\geqslant qa_2$. Then 
    $$y_2\be_2-qa_1\be_1=q\al+(y_2-qa_2)\be_2\in\SG(\be_1,\be_2,\al)$$
because $y_2-qa_2\geqslant0$. Conversely, suppose that $y_2\be_2-qa_1\be_1\in\SG(\be_1,\be_2,\al)$, so there are are $z,z_1,z_2\in\Nbb$ for which 
    $$y_2\be_2-qa_1\be_1=z\al+z_1\be_1+z_2\be_2=(z_1-za_1)\be_1+(z_2+za_2)\be_2.$$
Hence $za_1-z_1=qa_1$ and $z_2+za_2=y_2$, so $za_1=qa_1+z_1\geqslant qa_1$, and thus $z\geqslant q$, so
    \[y_2=z_2+za_2\geqslant z_2+qa_2\geqslant qa_2.\qedhere\]
\end{proof}

\noindent
See figures \ref{picture for abb_2-ab_1} and \ref{picture for (m+1)b_2-mb_1} for a geometric interpretation of Lemmas \ref{C_Q in real quadratic} and \ref{SG in real quadratic}. Using Lemmas \ref{C_Q in real quadratic} and \ref{SG in real quadratic}, we can explicitly calculate what the Frobenius semigroup looks like for certain elements of $\Ofrak_K^+$. Specifically, let $\be_1,\be_2\in\Ofrak_K^+$ be a positive integral basis for $\Ofrak_K$, and suppose that $\al\in\Ofrak_K^+$ is in the form
    $$\al=ab\be_2-a\be_1,$$
where $a,b\in\Nbb$ are nonzero natural numbers. Then we claim that 
    $$\Frob(\be_1,\be_2,\al)=(a-1)\be_1+C_\Qbb(\be_1,\be_2,\al)\cap\Ofrak_K.$$
To do this, we first show the following, which is a stronger version of Lemma \ref{C_Q in real quadratic}.

\begin{lemma}\label{C_Q for abb_1-ab_2}
Let $K$ be a real number field and $\be_1,\be_2\in\Ofrak_K^+$ be an integral basis for $\Ofrak_K$. If $a,b\in\Nbb$ are nonzero and $\al=ab\be_2-a\be_1\in\Ofrak_K^+$, then
\begin{equation}
    C_\Qbb(\be_1,\be_2,\al)\cap\Ofrak_K=\left\{\left.\frac{n}{a}\al+n_1\be_1+n_2\be_2 \right|  n,n_1,n_2\in\Nbb\right\}=\SG\!\left(\be_1,\be_2,\frac{\al}{a}\right).\label{simple frob C_Q(b1,b2,a)}
\end{equation}
\end{lemma}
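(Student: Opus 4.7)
The plan is to establish the two claimed equalities separately, with the second one being essentially immediate once we observe that $\al/a \in \Ofrak_K^+$. Indeed, since $\al = ab\be_2 - a\be_1$, the element $\al/a = b\be_2 - \be_1$ lies in $\Ofrak_K$ because $b \in \Nbb$ and $\be_1, \be_2 \in \Ofrak_K$, and it is positive because $\al$ is. Consequently $(n/a)\al = n(\al/a)$ for each $n \in \Nbb$, and the set $\{(n/a)\al + n_1\be_1 + n_2\be_2 \mid n,n_1,n_2 \in \Nbb\}$ is by definition $\SG(\be_1,\be_2,\al/a)$.

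For the first equality, the inclusion $\supseteq$ is routine: any $\gamma = (n/a)\al + n_1\be_1 + n_2\be_2$ with $n,n_1,n_2 \in \Nbb$ is a non-negative $\Qbb$-linear combination of $\be_1,\be_2,\al$, hence lies in $C_\Qbb(\be_1,\be_2,\al)$, and also lies in $\Ofrak_K$ by the preceding observation. So the substantive direction is $\subseteq$. To prove it, I will invoke Lemma \ref{C_Q in real quadratic} with $a_1 = a$ and $a_2 = ab$. This gives that any $\gamma \in C_\Qbb(\be_1,\be_2,\al) \cap \Ofrak_K$ can be written as $\gamma = y_1\be_1 + y_2\be_2$ with $y_1 \in \Zbb$, $y_2 \in \Nbb$, and $y_2 + by_1 \geqslant 0$.

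From here I split into two cases on the sign of $y_1$. If $y_1 \geqslant 0$, then $\gamma = 0 \cdot (\al/a) + y_1\be_1 + y_2\be_2$ already has the required form. If $y_1 < 0$, write $y_1 = -m$ for some positive $m \in \Nbb$; the constraint $y_2 + by_1 \geqslant 0$ becomes $y_2 - bm \geqslant 0$, so $y_2 - bm \in \Nbb$, and the computation
$$\gamma = -m\be_1 + y_2\be_2 = m(b\be_2 - \be_1) + (y_2 - bm)\be_2 = m \cdot \frac{\al}{a} + 0 \cdot \be_1 + (y_2 - bm)\be_2$$
exhibits $\gamma$ in the required form.

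There is no genuine obstacle here; the argument is a direct manipulation once Lemma \ref{C_Q in real quadratic} is in hand. The only conceptual point worth flagging is that the divisibility of $\al$ by $a$ inside $\Ofrak_K$ (which is built into the hypothesis $\al = ab\be_2 - a\be_1$) is precisely what lets us absorb the $1/a$ coefficient into a bona fide semigroup generator; without that divisibility, the rational coefficient $n/a$ could not in general be promoted to an integer and the equality would fail.
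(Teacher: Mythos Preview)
Your proof is correct. It differs from the paper's argument in that you invoke Lemma \ref{C_Q in real quadratic} to pass immediately to the integer-coordinate description of $C_\Qbb(\be_1,\be_2,\al)\cap\Ofrak_K$, after which a simple case split on the sign of $y_1$ finishes the job. The paper instead works directly from the definition of $C_\Qbb$: starting from a rational combination $x\al + x_1\be_1 + x_2\be_2 \in \Ofrak_K$, it applies the division algorithm to $x_2 + xab$ to extract $n, n_2$ with $0 \le n_2 < b$, sets $n_1 = (x_1 - xa) + n$, and then verifies $n_1 \ge 0$. Your route is shorter and reuses available machinery; the paper's route is self-contained (it does not call on Lemma \ref{C_Q in real quadratic}) and incidentally produces a canonical representative with $n_2 < b$, though that extra information is neither claimed in the statement nor used elsewhere.
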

\begin{proof}
The inclusion 
    $$C_\Qbb(\be_1,\be_2,\al)\supseteq\SG\!\left(\be_1,\be_2,\frac{\al}{a}\right)$$
is obvious because $a$ divides the coefficients of both $\be_1$ and $\be_2$ in the expansion of $\al$ as a linear combination of them. Now suppose that $x,x_1,x_2\in\Qbb_{\geqslant 0}$ are such that 
    $$x\al+x_1\be_1+x_2\be_2=(x_1-xa)\be_1+(x_2+xab)\be_2\in C_\Qbb(\be_1,\be_2,\al)\cap\Ofrak_K.$$
If $x=0$ then there is nothing to show, so suppose that $x\neq0$. Then $x_2+xab\in\Nbb$ because it must be an integer and all of $x,a,b$ are nonzero, and $x_1-xa\in\Zbb$, so 
    $$(x_2+xab)+b(x_1-xa)=x_2+bx_1\in\Nbb.$$ 
By the division algorithm, there exists $n,n_2\in\Nbb$ for which $x_2+xab=nb+n_2$ and $0\leqslant n_2<b$. Define $n_1\in\Zbb$ so that $x_1-xa=n_1-n$. If $x_1=x_2=0$ then $x$ will clearly be in the form we want it to be, so suppose that at least one of $x_1$ or $x_2$ is nonzero, and thus strictly positive. Then we have that
    $$0<x_2+bx_1=x_2+xab+b(x_1-xa)=nb+n_2+n_1b-nb=n_2+n_1b.$$
Because $0\leqslant n_2<b$, the above shows that
    $$n_1>-\frac{n_2}{b}>-1,$$
and thus $n_1\geqslant0$. Hence $n_1\in\Nbb$, so we have $x_2+xab=nb+n_2$ and $x_1-xa=n_1-n$, where $n,n_1,n_2\in\Nbb$. It follows that 
\begin{align*}
    x\al+x_1\be_1+x_2\be_2&=(x_1-xa)\be_1+(x_2+xab)\be_2\\
    &=(n_1-n)\be_1+(nb+n_2)\be_2\\
    &=\frac{n}{a}(ab\be_2-a\be_1)+n_1\be_1+n_2\be_2\\
    &=\frac{n}{a}\al+n_1\be_1+n_2\be_2, 
\end{align*}
so we have the inclusion in the other direction in equation (\ref{simple frob C_Q(b1,b2,a)}), and consequently equation (\ref{simple frob C_Q(b1,b2,a)}) is true.
\end{proof}

\noindent
Using the results of Lemmas \ref{C_Q in real quadratic}, \ref{SG in real quadratic}, and \ref{C_Q for abb_1-ab_2}, we can calculate the set Frobenius semigroup $\Frob(\be_1,\be_2,\al)$ in certain cases.

\begin{proposition}\label{Frob for abb1-ab2 proposition}
Let $K$ be a real number field and $\be_1,\be_2\in\Ofrak_K^+$ be an integral basis. If $a,b\in\Nbb$ are nonzero and $\al=ab\be_2-a\be_1\in\Ofrak_K^+$, then
    $$\Frob(\be_1,\be_2,\al)=(a-1)\be_1+C_\Qbb(\be_1,\be_2,\al)\cap\Ofrak_K.$$
\end{proposition}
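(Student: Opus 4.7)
The plan is to combine the identification $C_\Qbb(\be_1,\be_2,\al) \cap \Ofrak_K = \SG(\be_1,\be_2,\al/a)$ from Lemma \ref{C_Q for abb_1-ab_2} with the coordinatewise membership criterion for $\SG(\be_1,\be_2,\al)$ provided by Lemma \ref{SG in real quadratic}. The forward inclusion reduces to showing $(a-1)\be_1 \in \Frob(\be_1,\be_2,\al)$, since $\Frob$ is automatically stable under addition by any element of the monoid $C_\Qbb(\be_1,\be_2,\al) \cap \Ofrak_K$. For an arbitrary $v \in C_\Qbb(\be_1,\be_2,\al) \cap \Ofrak_K$, Lemma \ref{C_Q for abb_1-ab_2} lets me write $v = (n/a)\al + n_1\be_1 + n_2\be_2$ with $n, n_1, n_2 \in \Nbb$; dividing $n = qa + r$ with $0 \le r \le a-1$ then produces
$$(a-1)\be_1 + v = q\al + (a-1-r+n_1)\be_1 + (rb+n_2)\be_2,$$
which lies in $\SG(\be_1,\be_2,\al)$ because $a-1-r \ge 0$.

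For the reverse inclusion I would fix $w = w_1\be_1 + w_2\be_2 \in \Frob(\be_1,\be_2,\al)$, with $w_1 \in \Zbb$ and $w_2 \in \Nbb$ (the latter follows from any $\SG$-representation $w = (y_1-za)\be_1 + (y_2+zab)\be_2$), and aim for the single inequality $w_2 + b w_1 \ge b(a-1)$. The case $a = 1$ is immediate, since the statement collapses to $\Frob = \SG = C_\Qbb \cap \Ofrak_K$. For $a \ge 2$, I would select the test element $(n/a)\al \in C_\Qbb \cap \Ofrak_K$ for an $n \in \Nbb$ chosen so that $n > w_1$ and $n - w_1 \equiv 1 \pmod a$; then
$$w + (n/a)\al = (w_1 - n)\be_1 + (w_2 + nb)\be_2 \in \SG.$$
Writing $n - w_1 = qa + 1$ and applying Lemma \ref{SG in real quadratic}\ref{SG in real quadratic a} with $a_1 = a$, $a_2 = ab$, and $r = 1$ gives $w_2 + nb \ge (q+1)ab$; substituting $n = w_1 + qa + 1$ collapses this precisely to $w_2 + b w_1 \ge b(a-1)$. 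It then suffices to recognize $\gamma := w - (a-1)\be_1 = (w_1 - (a-1))\be_1 + w_2\be_2$ as an element of $C_\Qbb(\be_1,\be_2,\al) \cap \Ofrak_K$ via Lemma \ref{C_Q in real quadratic}: the two required conditions, $w_2 \ge 0$ and $w_2 \ge -b(w_1 - (a-1))$, are respectively automatic and exactly the inequality I have just extracted.

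The main obstacle is engineering the correct test vector in the reverse direction. Lemma \ref{SG in real quadratic} branches on the residue class of $y_1$ modulo $a$, and the sharpest bound arises from the residue $r = 1$, so one must produce a valid $n \in \Nbb$ for which $n - w_1$ falls in that worst residue class (tricky only when $w_1$ is very negative, where one takes $n = w_1 + 1 + ka$ for $k$ large). Everything else amounts to routine division-algorithm bookkeeping.
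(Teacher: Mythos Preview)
Your proof is correct and follows essentially the same route as the paper's. The only difference is cosmetic: for the reverse inclusion the paper splits into the cases $w_1\geqslant 0$ and $w_1<0$ and builds the test element separately in each (obtaining $y_1=1$ and $y_1=(q+1)a+1$, respectively, before invoking Lemma~\ref{SG in real quadratic}\ref{SG in real quadratic a}), whereas you unify both cases by directly choosing $n\equiv w_1+1\pmod a$ so that the residue is always $r=1$ --- a mild streamlining of the same idea.
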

\begin{proof}
By Lemma \ref{condition for maximality}, in order to show that $\Mfrak(\be_1,\be_2,\al)=\{(a-1)\be_1\}$, it will suffice to show that $(a-1)\be_1\in\Frob(\be_1,\be_2,\al)$, and if $w\in\Frob(\be_1,\be_2,\al)$, then $w\preccurlyeq(a-1)\be_1$. Note that $n_1\be_1+n_2\be_2\in\SG(\be_1,\be_2,\al)$ whenever $n_1,n_2\in\Nbb$, so, in view of lemma \ref{C_Q for abb_1-ab_2}, we need only show that 
    $$(a-1)\be_1+\frac{n}{a}\al\in\SG(\be_1,\be_2,\al)$$
for all $n\in\Nbb$ in order to show that $(a-1)\be_1+C_\Qbb(\be_1,\be_2,\al)\cap\Ofrak_K\subseteq\SG(\be_1,\be_2,\al)$. If $0\leqslant n<a$, then 
    $$(a-1)\be_1+\frac{n}{a}\al=(a-1)\be_1+nb\be_2-n\be_1=nb\be_2+(a-1-n)\be_1\in\SG(\be_1,\be_2)\subseteq \SG(\be_1,\be_2,\al)$$
because $n\leqslant a-1$. If $n\geqslant a$, then we can perform the division algorithm to get $q,r\in\Nbb$ for which $0\leqslant r<a$ and $n=aq+r$, so
    $$(a-1)\be_1+\frac{n}{a}\al=(a-1)\be_1+\frac{r}{a}\al+q\al\in\SG(\be_1,\be_2,\al)$$
by the above. Hence $(a-1)\be_1\in\Frob(\be_1,\be_2,\al)$. \\
\\
We now claim that if $w\in\Frob(\be_1,\be_2,\al)$, then $w\preccurlyeq (a-1)\be_1$. In order to do this, we make use of the isomorphism $\vp:\Ofrak_K\to\Zbb^2$ associated to the basis $\be_1,\be_2$ for $\Ofrak_K$.

\begin{figure}[H]
    \begin{tikzpicture}[scale = 0.5]
    
     \draw[thin, color=gray, fill = gray, opacity=0.3] (-6,18.5) -- (-6,18) -- (-3,18) -- (-3,9) -- (0,9) -- (0,0) -- (5.5,0) --(5.5,18.5) -- (-6,18.5) ;
    
    %GRID
    
        % \draw[style=help lines] (-16,-16) grid (16,20);
        
        \node[circle,inner sep=1, fill=black] at (0,0){};
        % \draw[] (0,0) ellipse (3 and 1);
        \draw (-9,0)--(9,0);
            \node at (9.8, 0) {$\be_1$};

    %COORDINATE LINES
        \draw (0,-1)--(0,19);
            \node at (0,19.5) {$\be_2$};

        \draw[thick] (0,0)--(-6,18);
    
    %POINTS ON ALPHA LINE

        \foreach \i in {1,2,...,6}
            \node[circle, inner sep=1, fill=black] at (-\i,3*\i) {};

        \node at (-2,3) {$\frac{1}{a}\al$};
            \foreach \j in {4,5} 
                \node[circle, inner sep=1, fill=black] at (-1,\j) {};
        
        \node[rotate=-70] at (-2.5,4.6){$\dots$};
        
        \node at (-3,6) {$\frac{n}{a}\al$};
            \node[circle, inner sep=1, fill=black] at (0,6) {};

            \foreach \i in {1,2} 
                \node[circle, inner sep=1, fill=black] at (-\i,8) {};
            
            \node[circle, inner sep=1, fill=black] at (-1,6) {}; 
            
            \draw [decorate,decoration={brace,amplitude=5pt,mirror,raise=2pt},yshift=0pt] (0,6)--(-2,6) node [black,midway,yshift=12, xshift=-0.4] {\footnotesize$n+1$};
            
            \node[rotate=-70] at (-2.5-1,4.6+3){$\dots$};

        \node at (-4,9) {$\al$}; 
            \foreach \i in {0,...,2}
                \node[circle, inner sep=1, fill=black] at (-\i,9) {};
        
            \foreach \i in {1,2,3}
                \foreach \j in {9,10,...,18}
                    \node[circle, inner sep=1, fill=black] at (-\i,\j) {};
            
            \foreach \j in {12,...,18}
                \node[circle, inner sep=1, fill=black] at (-4,\j) {};
            
            \foreach \j in {15,...,18}
                \node[circle, inner sep=1, fill=black] at (-5,\j) {};
        
        \node at (-5.3,12) {$\frac{a+1}{a}\al$};
            \foreach \i in {0,...,3}
                \node[circle, inner sep=1, fill=black] at (-\i,12) {};
        
        % \node at (-6.5,15) {$\frac{a+1}{a}\al$};
            \foreach \i in {0,...,4}
                \node[circle, inner sep=1, fill=black] at (-\i,15) {};
        
            \node[rotate=-70] at (-2.5-4,4.6+12){$\dots$};
            
            \node[rotate=-70] at (-2.5-3,4.6+9){$\dots$};
            \node[rotate=-70] at (-2.5-3*1.16,4.6+9*1.16){$\dots$};
        
        \node at (-6.8,18) {$2\al$};
            \foreach \i in {0,...,5}
                \node[circle, inner sep=1, fill=black] at (-\i,18) {};

    \foreach \j in {1,...,18}
        \node[circle, inner sep=1, fill=black] at (0,\j) {};
        
        \draw [decorate,decoration={brace,amplitude=5pt,mirror,raise=2pt},yshift=0pt] (0,0)--(0,6) node [black,midway,yshift=0, xshift=20] {\footnotesize$nb+1$};
        
        \draw [decorate,decoration={brace,amplitude=5pt,mirror,raise=2pt},yshift=0pt] (5.4,0)--(5.4,9) node [black,midway,yshift=0, xshift=20] {\footnotesize$ab+1$};

    %positive nodes
        \foreach \i in {1,2} 
            \foreach \j in {0,1,2,4,5,6,...,18}
                \node[circle, inner sep=1, fill=black] at (\i,\j) {};
         
        \foreach \i in {3,4,5} 
            \foreach \j in {0,1,...,18}
                \node[circle, inner sep=1, fill=black] at (\i,\j) {};
        
        % \foreach \j in {
        
    \end{tikzpicture}
    \caption{An illustration of the sets $\vp(\SG(\be_1,\be_2,\al))$ and $\vp(C_\Qbb(\be_1,\be_2,\al)\cap\Ofrak_K)$ in $\Zbb^2$. Black points represent elements of $\vp(C_\Qbb(\be_1,\be_2,\al)\cap\Ofrak_K)$, and the shaded region represents points of $\vp(\SG(\be_1,\be_2,\al))$. The thick black line represents the set $\vp(C_\Qbb(\al)\cap\Ofrak_K)$, which along with the positive $\be_1$ axis, marks the boundary of the set $\vp(C_\Qbb(\be_1,\be_2,\al)\cap\Ofrak_K)$.}\label{picture for abb_2-ab_1}
\end{figure}

\noindent
Note that the rational multiples of $\vp(\al)$ in $\vp(C_\Qbb(\be_1,\be_2,\al)\cap\Ofrak_K)$ all lie along the line $y=-bx$ in $\Zbb^2$. The region obtained by shifting the cone $\vp(C_\Qbb(\be_1,\be_2,\al)\cap\Ofrak_K)$ to the right by $(a-1)\vp(\be_1)$ will be bounded by the lines $y=0$ and $y=-bx+b(a-1)$. It follows that if $x_1\in\Zbb$, $x_2\in\Nbb$, and $x_1\be_1+x_2\be_2\in\Frob(\be_1,\be_2,\al)$, then $x_1\be_1+x_2\be_2\preccurlyeq(a-1)\be_1$ if and only if $x_2\geqslant -bx_1+b(a-1)$.\\
\\
In order to show that every element in $\Frob(\be_1,\be_2,\al)$ precedes $(a-1)\be_1$, we leverage Lemma \ref{SG in real quadratic} and use the observations at the end of the previous paragraph. Suppose that $x_1,x_2\in\Nbb$ and $w=x_1\be_1+x_2\be_2\in\Frob(\be_1,\be_2,\al)$, so we claim that $w\preccurlyeq(a-1)\be_1$. Then
    $$\frac{x_1+1}{a}\al+x_1\be_1+x_2\be_2=((x_1+1)b+x_2)\be_2-\be_1\in\SG(\be_1,\be_2,\al),$$
so Lemma \ref{SG in real quadratic} shows that we must have
    $$(x_1+1)b+x_2\geqslant ab.$$
This is equivalent to  
    $$x_2\geqslant -bx_1+b(a-1),$$
meaning $x_1\be_1+x_2\be_2\preccurlyeq(a-1)\be_1$.\\
\\
Now suppose that $x_1,x_2\in\Nbb$ are nonzero and $w=x_2\be_2-x_1\be_1\in\Frob(\be_1,\be_2,\al)$ (we can ignore the case where $x_2<0$ because no point in that form will be in $\SG(\be_1,\be_2,\al)$). By the division algorithm, there are $q,r\in\Nbb$ for which $x_1=qa+r$ and $0\leqslant r<a$. Then since $(a-r+1)\al/a\in C_\Qbb(\be_1,\be_2,\al)\cap\Ofrak_K$ by lemma \ref{C_Q for abb_1-ab_2}, we know that
\begin{align*}
    \frac{a-r+1}{a}\al+x_2\be_2-x_1\be_1&=(a-r+1)b\be_2-(a-r+1)\be_1+x_2\be_2-x_1\be_1\\
    &=((a-r+1)b+x_2)\be_2-((q+1)a+1)\be_1\in\SG(\be_1,\be_2,\al),
\end{align*}
and Lemma \ref{SG in real quadratic} shows that in order for this to be true, we must have
    $$(a-r+1)b+x_2\geqslant (q+2)ab,$$
which implies that 
    $$x_2\geqslant qab+2ab-ab+rb-b=bx_1+b(a-1).$$
Hence $x_2\be_2-x_1\be_1\preccurlyeq (a-1)\be_1$, so Lemma \ref{condition for maximality} shows that $(a-1)\be_1$ is the only maximal element of $\Frob(\be_1,\be_2,\al)$, and thus
    \[\Frob(\be_1,\be_2,\al)=(a-1)\be_1+C_\Qbb(\be_1,\be_2,\al)\cap\Ofrak_K.\qedhere\]
\end{proof}

\noindent
If we take $a=1$, then Lemma \ref{C_Q for abb_1-ab_2} and Proposition \ref{Frob for abb1-ab2 proposition} show that
    $$\Frob(\be_1,\be_2,b\be_2-\be_1)=C_\Qbb(\be_1,\be_2,b\be_2-\be_1)\cap\Ofrak_K=\SG(\be_1,\be_2,b\be_2-\be_1).$$
This gives us an example of a real number field $K$ and a collection of elements $\al_1,\dots,\al_n\in\Ofrak_K^+$ that is not a basis for $\Ofrak_K$ for which $\Frob(\al_1,\dots,\al_n)=\SG(\al_1,\dots,\al_n)$. Furthermore, if we take $a>1$ then we get a collection of elements in the form $\al=ab\be_2-a\be_1\in\Ofrak_K^+$ for which $\Frob(\be_1,\be_2,\al)$ never contains $0$, and thus
    $$\Frob(\be_1,\be_2)\not\subseteq\Frob(\be_1,\be_2,\al).$$
The existence of such elements $\al\in\Ofrak_K^+$ was promised at the end of section \ref{The Frobenius Problem Section}.

\section{The Number of Maximal Elements}\label{The Number of Maximal Elements Section}
From the results of sections \ref{The Structure of the Frobenius Set Section} and \ref{An Example of a Simple Frobenius Set For Quadratic Extensions Section}, it may be tempting to conclude that for a real number field $K$ and a fixed value of $n$, the number $\#\Mfrak(\al_1,\dots,\al_n)$ will be bounded above as the nonzero elements $\al_1,\dots,\al_n\in\Ofrak_K^+$ range over spanning sets of $\Ofrak_K$. However, we now show that this is not the case, and in fact, when $K$ is a quadratic extension we can make $\#\Mfrak(\al_1,\al_2,\al_3)$ arbitrarily large. In particular, we have the following:

\begin{proposition}\label{Frob set in quadratic extension with m maximal elements}
Let $K$ be a real quadratic number field and fix a positive integral basis $\be_1,\be_2\in\Ofrak_K^+$ for $\Ofrak_K$ with $\be_1<\be_2$. If $m>1$ is an integer and $\al=(m+1)\be_2-m\be_1$, then
    $$\Mfrak(\be_1,\be_2,\al)=\{(m-i)\be_1+(i-1)\be_2 \mid i=1,\dots,m\},$$
and thus
    $$\Frob(\be_1,\be_2,\al)=\bigcup_{i=1}^{m}\big((m-i)\be_1+(i-1)\be_2+C_\Qbb(\be_1,\be_2,\al)\cap\Ofrak_K\big).$$
\end{proposition}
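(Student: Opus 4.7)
The plan is to apply Lemma \ref{condition for maximality} with the proposed list $\mu_i = (m-i)\be_1 + (i-1)\be_2$ for $i = 1, \dots, m$; once its two conditions are verified, Lemma \ref{Frob as bad union} yields the claimed description of $\Frob(\be_1, \be_2, \al)$. Three things must be checked: each $\mu_i \in \Frob(\be_1, \be_2, \al)$; no two distinct $\mu_i$ are $\preccurlyeq$-comparable; and every $w \in \Frob(\be_1, \be_2, \al)$ is preceded by some $\mu_i$.

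The first two items are direct computations via Lemmas \ref{C_Q in real quadratic} and \ref{SG in real quadratic} with $a_1 = m, a_2 = m+1$. Given $\ga = y_1\be_1 + y_2\be_2 \in C_\Qbb(\be_1, \be_2, \al) \cap \Ofrak_K$, the sum $\mu_i + \ga = (m - i + y_1)\be_1 + (i - 1 + y_2)\be_2$ is in $\SG$ for free when $m - i + y_1 \geq 0$; otherwise set $y_1'' = i - m - y_1 \geq 1$, write $y_1'' = mp + q$ with $0 \leq q < m$, and compare the threshold in Lemma \ref{SG in real quadratic} against the ceiling of the cone constraint $y_2 \geq (m+1)(m - i + y_1'')/m$. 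In the subcases $q = 0$, $0 < q \leq i$, and $i < q < m$, the needed inequality follows from an elementary computation. For incomparability, Lemma \ref{C_Q in real quadratic} applied to $\mu_i - \mu_j = (j - i)\be_1 + (i - j)\be_2$ shows that for $i < j$ the $\be_2$-coefficient is negative and for $i > j$ the cone inequality $y_2 \geq -(m+1)y_1/m$ fails, so neither direction lies in $C_\Qbb \cap \Ofrak_K$.

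The main obstacle is the third item. Writing $w = x_1\be_1 + x_2\be_2$, Lemma \ref{C_Q in real quadratic} translates $w \preccurlyeq \mu_i$ into the pair $i \leq x_2 + 1$ and $i \geq m^2 - mx_2 - (m+1)x_1$. Since $w \in \SG$ forces $x_2 \geq 0$, an admissible $i \in \{1, \dots, m\}$ exists if and only if both (A) $(m+1)x_1 + mx_2 \geq m(m-1)$ and (B) $x_1 + x_2 \geq m - 1$ hold. A short case analysis via the identity $(m+1)x_1 + mx_2 = m(x_1 + x_2) + x_1$ (split according as $0 \leq x_1 \leq m-1$, $x_1 \geq m$, or $x_1 < 0$) shows that (A) together with $x_2 \geq 0$ and the integrality of $x_1 + x_2$ already forces (B), so only (A) needs to be proven.

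To show $w \in \Frob(\be_1, \be_2, \al) \Rightarrow (\mathrm{A})$, I argue the contrapositive: assume $(m+1)x_1 + mx_2 \leq m^2 - m - 1$ and target the bad lattice points $b_p \coloneqq -(mp+1)\be_1 + (p(m+1) + m)\be_2$ for $p \in \Nbb$. By Lemma \ref{C_Q in real quadratic}, $b_p \in C_\Qbb(\be_1, \be_2, \al) \cap \Ofrak_K$ (using $m \geq 2$, so $m^2 \geq m + 1$); by Lemma \ref{SG in real quadratic}, $b_p \notin \SG(\be_1, \be_2, \al)$ because the threshold jumps to $(p+1)(m+1) = p(m+1) + m + 1$, exactly one more than the $\be_2$-coefficient of $b_p$. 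Setting $\ga_p \coloneqq b_p - w$, a direct calculation shows $\ga_p \in C_\Qbb(\be_1, \be_2, \al) \cap \Ofrak_K$ for $p$ sufficiently large, precisely because $(m+1)x_1 + mx_2 \leq m^2 - m - 1$; hence $w + \ga_p = b_p \notin \SG$, contradicting $w \in \Frob(\be_1, \be_2, \al)$ and completing the verification.
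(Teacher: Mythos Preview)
Your proof is correct, and it follows the same overall template as the paper's: verify the hypotheses of Lemma~\ref{condition for maximality} and conclude via Lemma~\ref{Frob as bad union}. The verifications that each $\mu_i\in\Frob(\be_1,\be_2,\al)$ and that the $\mu_i$ are pairwise $\preccurlyeq$-incomparable are essentially the same in spirit (the paper uses a slope argument for incomparability where you invoke Lemma~\ref{C_Q in real quadratic} directly, but these amount to the same computation).

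The genuine difference is in the third item. The paper argues \emph{forward}: for $w=x_1\be_1+x_2\be_2\in\Frob$, it adds a carefully chosen element of $C_\Qbb\cap\Ofrak_K$ to $w$ so as to land at a point with $\be_1$-coefficient exactly $-1$ (or $-((q+1)m+1)$ in the $x_1<0$ case), and then reads off the required inequality from Lemma~\ref{SG in real quadratic}. You instead reformulate ``$\exists i:\ w\preccurlyeq\mu_i$'' as the conjunction of two linear inequalities (A) and (B), observe that (A) together with $x_2\ge 0$ and integrality already forces (B), and then prove (A) by \emph{contrapositive}: exhibit a one-parameter family of obstruction points $b_p\in(C_\Qbb\cap\Ofrak_K)\setminus\SG$ and show that if (A) fails then $b_p-w\in C_\Qbb\cap\Ofrak_K$ for large $p$, so $w\notin\Frob$. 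Your reduction to the single inequality (A) is a clean simplification that avoids the paper's separate treatment of $x_1\ge 0$ versus $x_1<0$; the paper's approach, on the other hand, is more constructive and makes the geometry of the $\mu_i$ (lying on the line $y=m-1-x$) more visible.
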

\begin{proof}
Note that $\al\in\Ofrak_K^+$ because $\be_1<\be_2$. For each $i=1,\dots,m$, let 
    $$\mu_i=(m-i)\be_1+(i-1)\be_2,$$
so we claim $\Mfrak(\be_1,\be_2,\al)=\{\mu_1,\dots,\mu_m\}$. \\
\\
To do this, we first show that each $\mu_i\in\Frob(\be_1,\be_2,\al)$, and then apply Lemma \ref{condition for maximality}. Recall, by Lemma \ref{C_Q in real quadratic}, that the set $C_\Qbb(\be_1,\be_2,\al)\cap\Ofrak_K$ is given by
    $$C_\Qbb(\be_1,\be_2,\al)\cap\Ofrak_K=\left\{x_1\be_1+x_2\be_2 \left|\, x_1,x_2\in\Zbb, \ x_2\geqslant 0 \,\text{ and } 
    x_2\geqslant-\frac{m+1}{m}x_1\right.\right\}.$$
If $x_1,x_2\in\Nbb$ then it is clear that $\mu_i+x_1\be_1+x_2\be_2\in\SG(\be_1,\be_2,\al)$ because $\mu_i,x_1\be_1+x_2\be_2\in\SG(\be_1,\be_2,\al)$, so suppose that $x_1,x_2\in\Nbb$ and $x_2\be_2-x_1\be_1\in C_\Qbb(\be_1,\be_2,\al)\cap\Ofrak_K$. First assume that $x_1\leqslant m$. Then
    $$\mu_i+x_2\be_2-x_1\be_1=(m-i-x_1)\be_1+(i-1+x_2)\be_2,$$
and if $x_1\leqslant m-i$ then the above is clearly in $\SG(\be_1,\be_2)$, so suppose that $x_1>m-i$. Then the fact that $x_1\leqslant m$ implies that $x_1\leqslant 2m-i$, and we can thus write 
\begin{align*}
    \mu_i+x_2\be_2-x_1\be_1&=((m+1)\be_2-m\be_1)+((2m-i-x_1)\be_1+(x_2+i-2-m)\be_2)\\
    &=\al+(2m-i-x_1)\be_1+(x_2+i-2-m)\be_2.
\end{align*}
We know that $2m-i-x_1\geqslant 0$, and the fact that $x_1>m-i$ implies that $x_1\geqslant m-i+1$. The fact that $x_2\be_2-x_1\be_1\in C_\Qbb(\be_1,\be_2,\al)\cap\Ofrak_K$, combined with Lemma \ref{C_Q in real quadratic}, then shows that 
    $$x_2\geqslant \frac{m+1}{m}x_1\geqslant\frac{m+1}{m}(m-i+1)=m+2+\frac{1}{m}-i-\frac{i}{m}.$$
Hence
    $$x_2-m-2+i\geqslant \frac{1}{m}-\frac{i}{m}\geqslant \frac{1}{m}-1>-1,$$
so $x_2-m-2+i\geqslant0$, and thus
\begin{equation}
    \mu_i+x_2\be_2-x_1\be_1=\al +(2m-i-x_1)\be_1+(x_2+i-2-m)\be_2\in\SG(\al,\be_1,\be_2).\label{mui+x2b2+x1b1 for x1<m}
\end{equation}
Now suppose that $x_1>m$, $x_2\be_2-x_1\be_1\in C_\Qbb(\be_1,\be_2,\al)\cap\Ofrak_K$, and write $x_1=qm+r_1$ for $q,r_1\in\Nbb$ and $0\leqslant r_1<m$. If $x_2\geqslant (q+1)(m+1)$ and $r_1>0$ then part \ref{SG in real quadratic a} of Lemma \ref{SG in real quadratic} shows that $x_2\be_2-x_2\be_1\in\SG(\be_1,\be_2,\al)$, and if $x_2\geqslant (q+1)(m+1)$ and $r_1=0$ then part \ref{SG in real quadratic b} of Lemma \ref{SG in real quadratic} shows that $x_2\be_2-x_1\be_1\in\SG(\be_1,\be_2,\al)$, since $x_2\geqslant (q+1)(m+1)>q(m+1)$. We thus suppose that $x_2<(q+1)(m+1)=q(m+1)+m+1$. The condition that $x_2\be_2-x_1\be_1\in C_\Qbb(\be_1,\be_2,\al)\cap\Ofrak_K$, combined with Lemma \ref{C_Q in real quadratic}, then gives 
    $$x_2\geqslant \frac{m+1}{m}x_1=\frac{m+1}{m}(qm+r_1)=q(m+1)+r_1\frac{m+1}{m}.$$
Hence $q(m+1)\leqslant x_2<(q+1)(m+1)$, so we can write $x_2=q(m+1)+r_2$ for some $0\leqslant r_2<m+1$, meaning 
    $$x_2\be_2-x_1\be_1=q(m+1)\be_2+r_2\be_2-qm\be_1-r_1\be_1=q\al+r_2\be_2-r_1\be_1.$$
The fact that $0\leqslant r_1<m$, combined with what we showed in equation (\ref{mui+x2b2+x1b1 for x1<m}), then gives that
    $$\mu_i+x_2\be_2-x_1\be_1=q\al+(\mu_i+r_2\be_2-r_1\be_1)\in\SG(\be_1,\be_2,\al).$$
Thus $\mu_1,\dots,\mu_m\in\Frob(\be_1,\be_2,\al)$.\\
\\
We now apply Lemma \ref{condition for maximality} in order to show that $\Mfrak(\be_1,\be_2,\al)=\{\mu_1,\dots,\mu_m\}$, so we must first show that $\mu_i\gprecneq\mu_j$ for any distinct $i,j\in\{1,\dots,m\}$. Let $\vp:\Ofrak_K\to\Zbb^2$ be the $\Zbb$-module isomorphism associated to the basis $\be_1,\be_2$ for $\Ofrak_K$, and let $\pi_i:\Ofrak_K\to\Zbb$, $i=1,2$, be the projection of $\Ofrak_K=\Zbb\be_1\oplus\Zbb\be_2$ onto $\Zbb\be_i$. Note that if $\ga_1,\ga_2\in \Frob(\be_1,\be_2,\al)$ and $\pi_2(\ga_1)<\pi_2(\ga_2)$, then $\ga_1\gprecneq\ga_2$ because $\pi_2(C_\Qbb(\be_1,\be_2,\al)\cap\Ofrak_K)\subseteq\Nbb$ by Lemma \ref{C_Q in real quadratic}. Hence if $i<j$ then $\mu_i\gprecneq\mu_j$ because
    $$\pi_2(\mu_i)=i-1<j-1=\pi_2(\mu_j).$$
Now, note that the slope of the line in $\Zbb^2$ connecting $\vp(\mu_i)$ to $\vp(\mu_j)$ is $-1$, while, by Lemma 13, the slope of the line extending out of $\vp(\mu_i)$ that determines one edge of the boundary of $\vp(\mu_i)+\vp(C_\Qbb(\be_1,\be_2,\al)\cap\Ofrak_K)$ is
    $$-\frac{m+1}{m}<-1.$$
Hence $\vp(\mu_j)$ cannot be contained in $\vp(\mu_i)+\vp(C_\Qbb(\be_1,\be_2,\al)\cap\Ofrak_K)$, so $\mu_j$ cannot be contained in $\mu_i+C_\Qbb(\be_1,\be_2,\al)\cap\Ofrak_K$, and thus it is not possible that $\mu_j\preccurlyeq\mu_i$.

\begin{figure}[H]
    \begin{tikzpicture}[scale = 0.5]
        % \draw[style=help lines] (-16,-16) grid (16,20);

         %SG(b1,b2,a)
        \draw[thin, color=gray, fill = gray, opacity=0.3] (-8,10.2)--(-8,10)--(-4,10)--(-4,5)--(0,5)--(0,0)--(8.2,0)--(8.2,10.2)--(-8,10.2);

        \node[circle,inner sep=1, fill=black] at (0,0){};

        %Coordinate Lines
        \draw[thin] (-9,0)--(9,0);
            \node at (9.5,0) {$\be_1$};

        \draw[thin] (0,-0.5)--(0,11);
            \node at (0.1,11.4) {$\be_2$};

        %\alpha
        \draw[scale=1, domain=0:-8, smooth, variable=\x, black, thick] plot ({\x}, {(-5/4)*\x});
        \draw[thick] (0,0)--(9,0);

        \node[circle, inner sep=1, fill=black] at (-4,5) {};
            \node at (-4.5,5) {$\alpha$};

        \node[circle, inner sep=1, fill=black] at (-8,10) {};
            \node at (-8.6,10.1) {$2\al$};

        %Dots for C_Q
        \foreach \x in {0,...,8}
            \foreach \y in {0,...,10}
                \node[circle, inner sep=1, fill=black] at (\x,\y) {};
        
        \foreach \x in {0,...,-4}
            \foreach \y in {-\x,...,9}
                \node[circle, inner sep=1 pt, fill=black] at (\x,\y+1) {};

        \foreach \x in {-5,...,-8}
            \foreach \y in {-\x,...,8}
                \node[circle, inner sep=1 pt, fill=black] at (\x,\y+2) {};

        %Maximal elements and their cones

        \foreach \i in {1,...,4} 
            \draw[scale=1, domain={4-\i}:{-5-(0.2)*(\i-1))}, smooth, variable=\x, black, very thick] plot ({\x}, {(-5/4)*(\x-(4-\i))-1+\i});

        \foreach \i in {2,...,4} 
            \node at (4-\i-0.5,\i-1) {\footnotesize$\mu_{\i}$};
        \node at (3.2,-0.4) {\footnotesize$\mu_1$};
        
        \foreach \i in {1,...,4}
            \node[circle, inner sep=1, fill=black] at (4-\i,\i-1) {};

        \foreach \i in {1,...,4} 
            \draw[scale=1, domain={4-\i}:{8}, smooth, variable=\x, black, very thick] plot ({\x}, {\i-1});

    \end{tikzpicture}
    \caption{An illustration of the sets $\vp(\SG(\be_1,\be_2,\al)),\vp(C_\Qbb(\be_1,\be_2,\al)\cap\Ofrak_K)$, and $\vp(\mu_i)+\vp(C_\Qbb(\be_1,\be_2,\al)\cap\Ofrak_K)$ in $\Zbb^2$ for the case $m=4$. The black points represent elements of $\vp(C_\Qbb(\be_1,\be_2,\al)\cap\Ofrak_K)$, the shaded region represents elements of $\vp(\SG(\be_1,\be_2,\al))$, and the thick lines coming out of each $\mu_i$ represent part of the boundary of the set $\vp(\mu_i)+\vp(C_\Qbb(\be_1,\be_2,\al)\cap\Ofrak_K)$. }\label{picture for (m+1)b_2-mb_1}
\end{figure}

\noindent 
We now show that if $w\in\Frob(\be_1,\be_2,\al)$, then $w\preccurlyeq\mu_i$ for some $i=1,\dots,m$. We first deal with the elements of $\Frob(\be_1,\be_2,\al)$ with a non-negative $\be_1$ coefficient, meaning we claim that if $x_1,x_2\in\Nbb$ and $w=x_1\be_1+x_2\be_2\in\Frob(\be_1,\be_2,\al)$, then $w\preccurlyeq\mu_i$ for some $i=1,\dots,m$. Note that in $\Zbb^2$, the points $\vp(\mu_1),\dots,\vp(\mu_m)$ all lie along the line $y=m-1-x$, and furthermore, the points $\vp(\mu_1),\dots,\vp(\mu_m)$ consist of all integral points along this line with non-negative $x$ and $y$ coordinates. Thus if $x_1,x_2\in\Nbb$ and $w=x_1\be_1+x_2\be_2$, then $w\preccurlyeq\mu_i$ for some $i$ if and only if $x_2\geqslant m-1-x_1$. If $x_1\geqslant m-1$ then $m-1-x_1\leqslant0\leqslant x_2$ because $x_2\in\Nbb$, so we know that $w\preccurlyeq\mu_i$ for some $i$. Now suppose that $w=x_1\be_1+x_2\be_2\in\Frob(\be_1,\be_2,\al)$ and $x_1<m-1$. We claim that $w=x_1\be_1+x_2\be_2\preccurlyeq\mu_i$ for some $i=1,\dots,m$. By Lemma \ref{C_Q in real quadratic}, we know that $(2+x_1)\be_2-(1+x_1)\be_1\in C_\Qbb(\be_1,\be_2,\al)\cap\Ofrak_K$ because 
    $$\frac{m+1}{m}(1+x_1)=x_1+1+\frac{x_1+1}{m}<x_1+2.$$
Now, 
    $$w+(2+x_1)\be_2-(1+x_1)\be_1=(2+x_1+x_2)\be_2-\be_1,$$
so the fact that $w\in\Frob(\be_1,\be_2,\al)$ shows that $(2+x_1+x_2)\be_2-\be_1\in\SG(\be_1,\be_2,\al)$, and Lemma \ref{SG in real quadratic} then shows that 
    $$2+x_1+x_2\geqslant m+1.$$ 
Then
    $$x_2\geqslant m-1-x_1,$$
so $w\preccurlyeq\mu_i$ for some $i$. \\
\\
We now deal with the elements of $\Frob(\be_1,\be_2,\al)$ with a negative $\be_1$ coefficient, meaning we claim that if $x_1,x_2\in\Nbb$ and $w=x_2\be_2-x_1\be_1\in\Frob(\be_1,\be_2,\al)$, then $w\preccurlyeq\mu_m$. As before, note that the line in $\Zbb^2$ determining part of the boundary of the cone $\vp(\mu_m)+\vp(C_\Qbb(\be_1,\be_2,\al)\cap\Ofrak_K)$ is $y=-\frac{m+1}{m}x+m-1$, so we have $w\preccurlyeq\mu_m$ if and only if $x_2\geqslant \frac{m+1}{m}x_1+m-1$. Let $x_1=qm+r$, where $q,r\in\Nbb$ and $0\leqslant r< m$. Then if $r\geqslant 1$, 
    $$(m-r+2)\be_2-(m-r+1)\be_1\in C_\Qbb(\be_1,\be_2,\al)\cap\Ofrak_K$$
by Lemma \ref{C_Q in real quadratic}, since
    $$\frac{m+1}{m}(m-r+1)=m+1+\frac{m+1}{m}(1-r)\leqslant m+1+1-r=m-r+2.$$
Adding $w$ to this point yields
\begin{align*}
    w+(m-r+2)\be_2-(m-r+1)\be_1&=(m-r+2+x_2)\be_2-(m-r+1+x_1)\be_1\\
    &=(m-r+2+x_2)\be_2-((q+1)m+1)\be_1,
\end{align*}
and the fact that $w\in\Frob(\be_1,\be_2,\al)$ then shows that this element is in $\SG(\be_1,\be_2,\al)$. Lemma \ref{SG in real quadratic} then implies that we must have 
    $$m-r+2+x_2\geqslant (q+2)(m+1).$$
It follows that
\begin{align*}
    x_2&\geqslant q(m+1)+m+r=\frac{x_1-r}{m}(m+1)+m+r=\frac{m+1}{m}x_1+m-\frac{r}{m}> \frac{m+1}{m}x_1+m-1,
\end{align*}
so $w=x_2\be_2-x_1\be_1\preccurlyeq\mu_m$.\\
\\
If $r=0$ then $x_1=qm$, the point $2\be_2-\be_1\in C_\Qbb(\be_1,\be_2,\al)\cap\Ofrak_K$ because $2\geqslant\frac{m+1}{m}$, and we have
    $$w+2\be_2-\be_1=x_2\be_2-qm\be_1+2\be_2-\be_1=(2+x_2)\be_2-(qm+1)\be_1\in\SG(\be_1,\be_2,\al).$$
By Lemma \ref{SG in real quadratic}, this means that $2+x_2\geqslant (q+1)(m+1),$ so
    $$x_2\geqslant q(m+1)+m-1=\frac{m+1}{m}x_1+m-1,$$
and thus $w\preccurlyeq\mu_m$. It follows that conditions \ref{maxcond1} and \ref{maxcond2} of Lemma \ref{condition for maximality} are satisfied, so $\Mfrak(\be_1,\be_2,\al)=\{\mu_1,\dots,\mu_m\}$, and thence the set $\Frob(\be_1,\be_2,\al)$ is given by 
    \[\Frob(\be_1,\be_2,\al)=\bigcup_{i=1}^{m}\big((m-i)\be_1+(i-1)\be_2+C_\Qbb(\be_1,\be_2,\al)\cap\Ofrak_K\big).\qedhere\]
\end{proof}

\noindent
Note that in the above proof, the elements $\mu_2,\dots,\mu_{m-1}$ are sort of ``fringe" maximal elements. That is, with the exception of $\mu_2,\dots,\mu_{m-1}$, every element of $\Frob(\be_1,\be_2,\al)$ will either precede $\mu_1$ or $\mu_m$.\\
\\
While the above proposition shows that in the case of quadratic extensions, we can make $\#\Mfrak(\al_1,\al_2,\al_3)$ arbitrarily large, we strongly believe that something similar will be the case for arbitrary real number fields.

\begin{conjecture}\label{conjecture about size of set of maximal elements}
Let $K$ be a real number field and $\be_1,\dots,\be_d\in\Ofrak_K^+$ be a basis for $\Ofrak_K$ as a $\Zbb$-module. Then for any positive integer $m\geqslant 1$, there is some $\al\in\Ofrak_K^+$ for which $\#\Mfrak(\be_1,\dots,\be_d,\al)\geqslant m$. 
\end{conjecture}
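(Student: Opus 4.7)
The plan is to generalize the quadratic construction of Proposition \ref{Frob set in quadratic extension with m maximal elements}. Assume $d:=[K:\Qbb]\geqslant 2$ (when $d=1$, $K=\Qbb$ and one easily sees $\#\Mfrak(1,\al)=1$ for every $\al\in\Nbb$, so the conjecture is only interesting for $d\geqslant 2$). After relabeling the basis so that $\be_1<\be_d$, define
$$\al=(m+1)\be_d-m\be_1\in\Ofrak_K^+,\qquad \mu_i=(m-i)\be_1+(i-1)\be_d\quad(i=1,\dots,m).$$
I claim that $\Mfrak(\be_1,\dots,\be_d,\al)=\{\mu_1,\dots,\mu_m\}$, which yields the conjecture.

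The first step is a reduction to a rank-$2$ sublattice. Let $L=\Zbb\be_1\oplus\Zbb\be_d\subseteq\Ofrak_K$. Since $\al\in L$ and since in any $\Qbb_{\geqslant 0}$-combination of $\be_1,\dots,\be_d,\al$ the coefficient of $\be_j$ ($j\ne 1,d$) arises only from $\be_j$ itself, one checks directly from the definitions that
\begin{align*}
    C_\Qbb(\be_1,\dots,\be_d,\al)\cap\Ofrak_K &= \bigl(C_\Qbb(\be_1,\be_d,\al)\cap L\bigr)+\Nbb\be_2+\cdots+\Nbb\be_{d-1},\\
    \SG(\be_1,\dots,\be_d,\al) &= \SG_L(\be_1,\be_d,\al)+\Nbb\be_2+\cdots+\Nbb\be_{d-1},
\end{align*}
where $\SG_L(\be_1,\be_d,\al)$ denotes the semigroup generated inside $L$. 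Writing $w=\sum z_i\be_i$ with $z_i\in\Zbb$, one deduces that $w\in\Frob(\be_1,\dots,\be_d,\al)$ iff $z_j\in\Nbb$ for all $j=2,\dots,d-1$ and $z_1\be_1+z_d\be_d$ lies in the analogous Frobenius set on $L$; moreover, $\preccurlyeq$ decomposes component-wise in the same manner.

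From this component-wise description, any maximal $w\in\Mfrak(\be_1,\dots,\be_d,\al)$ must have $z_j=0$ for $j=2,\dots,d-1$ (otherwise $w$ would be strictly preceded by $w-\be_j$), and the $(z_1,z_d)$-part must be maximal in the rank-$2$ Frobenius set on $L$; conversely, any $w$ of that form is maximal. It therefore suffices to compute the maximal elements of the rank-$2$ Frobenius set for $(\be_1,\be_d,\al)$ on $L$, which is exactly the content of Proposition \ref{Frob set in quadratic extension with m maximal elements}. Its proof, together with the supporting Lemmas \ref{C_Q in real quadratic}, \ref{SG in real quadratic}, and \ref{C_Q for abb_1-ab_2}, uses only the rank-$2$ free $\Zbb$-module structure, real positivity, and $\Qbb$-linear independence of $\be_1,\be_d$, so it applies verbatim to $L$ and produces the maximal elements $\mu_1,\dots,\mu_m$.

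The main obstacle is that $L$ is in general merely a sublattice of $\Ofrak_K$ and need not be the ring of integers of any real quadratic subfield of $K$; consequently, one must reread the proof of Proposition \ref{Frob set in quadratic extension with m maximal elements} and confirm that no multiplicative structure of $\Ofrak_K$ is used. Inspection shows this is indeed the case. A cleaner presentation would first restate and reprove the relevant auxiliary lemmas in the abstract setting of a rank-$2$ free $\Zbb$-module generated by two positive reals, and then apply them to both the real quadratic case and the present higher-degree reduction uniformly.
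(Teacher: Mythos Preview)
The paper does not prove this statement: it is stated as an open conjecture immediately after Proposition~\ref{Frob set in quadratic extension with m maximal elements}, so there is no proof in the paper to compare against. Your proposal is therefore not a rederivation but a genuine resolution of the conjecture (for $d\geqslant 2$).

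Your argument is correct. The key reduction
\[
C_\Qbb(\be_1,\dots,\be_d,\al)\cap\Ofrak_K=\bigl(C_\Qbb(\be_1,\be_d,\al)\cap L\bigr)\ \oplus\ \bigoplus_{j=2}^{d-1}\Nbb\be_j,
\qquad
\SG(\be_1,\dots,\be_d,\al)=\SG_L(\be_1,\be_d,\al)\ \oplus\ \bigoplus_{j=2}^{d-1}\Nbb\be_j
\]
follows exactly as you say, because $\al\in L$ and the $\be_j$ for $2\leqslant j\leqslant d-1$ contribute independent coordinates. The component-wise splitting of $\preccurlyeq$ and the consequence that maximal elements must have vanishing $\be_j$-coordinates for $j\ne 1,d$ are both valid. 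Your observation that the proofs of Lemmas~\ref{C_Q in real quadratic}, \ref{SG in real quadratic} and Proposition~\ref{Frob set in quadratic extension with m maximal elements} use only the rank-$2$ free $\Zbb$-module structure of $\Zbb\be_1\oplus\Zbb\be_2$, the real positivity of the generators, and $\Qbb$-linear independence---and nothing about $\Ofrak_K$ as a ring or about $K$ being quadratic---is accurate, so those arguments transfer verbatim to $L=\Zbb\be_1\oplus\Zbb\be_d$. The same holds for the auxiliary Lemmas~\ref{max elements of Frob} and~\ref{condition for maximality}, whose proofs are purely order-theoretic given positivity of the generators.

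Two remarks. First, you correctly flag that for $d=1$ one has $\#\Mfrak(1,\al)=1$ for every $\al$, so the conjecture as literally stated is false; what you prove is the corrected version with $d\geqslant 2$. Second, your closing suggestion is well taken: since the argument shows that Proposition~\ref{Frob set in quadratic extension with m maximal elements} is really a statement about any rank-$2$ free $\Zbb$-module generated by two $\Qbb$-linearly independent positive reals, a cleaner exposition would state and prove it in that generality once, deriving both the quadratic case and the general conjecture as immediate corollaries.
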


\noindent
Given a real number field $K$ and $\al_1,\dots,\al_n\in\Ofrak_K^+$ that generate $\Ofrak_K$ as a $\Zbb$-module, another interesting question to consider is how the elements of $\Mfrak(\al_1,\dots,\al_n)$ are related to each other. Lemma \ref{maximal elements are pairwise linearly independent} shows that the elements of $\Mfrak(\al_1,\dots,\al_n)$ are all pairwise linearly independent, but the calculations done in sections \ref{An Example of a Simple Frobenius Set For Quadratic Extensions Section} and \ref{The Number of Maximal Elements Section} suggest that there is likely much more to be seen. For example, all of the maximal elements in the set $\Frob(\be_1,\be_2,\al)$ in Proposition \ref{Frob set in quadratic extension with m maximal elements} are collinear when considered as elements in $\Zbb^2$ under the isomorphism $\Ofrak_K\to\Zbb^2$ associated to the basis $\be_1,\be_2$. Suppose that $K$ is an arbitrary real number field of degree $d$ with integral basis $\be_1,\dots,\be_d\in\Ofrak_K^+$, and we have elements $\al_1,\dots,\al_n\in\Ofrak_K^+$. A natural question to ask is then whether there is some nice geometric description of the elements of $\Mfrak(\be_1,\dots,\be_d,\al_1,\dots,\al_n)$ when they are considered as elements in $\Zbb^d$ under the isomorphism $\Ofrak_K\to\Zbb^d$ associated to the basis $\be_1,\dots,\be_d$ for $\Ofrak_K$?\\
\\
It is also important to note that all of the explicit calculations of Frobenius semigroups that we have so far only consider cases where the collection of elements $\al_1,\dots,\al_n\in\Ofrak_K^+$ contain a basis for $\Ofrak_K$. If $\al_1,\dots,\al_n\in\Ofrak_K^+$ span $\Ofrak_K$ as a $\Zbb$-module, then there will certainly be some collection $\al_{i_1},\dots,\al_{i_d}$, where $d=[K:\Qbb]$, that is linearly independent. However, in this case the elements $\al_{i_1},\dots,\al_{i_d}$ need not span $\Ofrak_K$. Focusing on the quadratic case, let us assume that $\be_1,\be_2\in\Ofrak_K^+$ are linearly independent, and $a_1,a_2\in\Qbb_{\geqslant0}$ are such that $\be_1,\be_2,a_2\be_2-a_1\be_1\in\Ofrak_K^+$ span $\Ofrak_K$ as a $\Zbb$-module. Then the nice conclusions of Lemmas \ref{C_Q in real quadratic} and \ref{SG in real quadratic} may not hold, since their proofs rely on the assumption that $\be_1,\be_2$ is an integral basis for $\Ofrak_K$. This shows that calculations of the Frobenius semigroup in cases where the elements do not contain a basis could potentially be much more complicated. \\
\\
Another thing to consider, is given some real number field $K$ of degree $d$ and $\al_1,\dots,\al_n\in\Ofrak_K^+$ that generate $\Ofrak_K$ as a $\Zbb$-module, is there some nice bound on $\#\Mfrak(\al_1,\dots,\al_n)$ in terms of $n,d$, and $\al_1,\dots,\al_n$? Proposition \ref{Frob set in quadratic extension with m maximal elements} shows that this bound cannot depend on only $n$ and $d$, but it does not rule out the possibility of the bound also depending on $\al_1,\dots,\al_n$. Specifically, there may be some bounds on $\#\Mfrak(\al_1,\dots,\al_n)$ that depend on more number theoretic properties of $\Ofrak_K$, perhaps similar in nature to some of the bounds appearing in \cite{integerknaps}, \cite{Polyhedra}, or \cite{TotallyReal}.

\printbibliography

\end{document}